\DeclareMathOperator{\sech}{sech}
\Crefname{appsec}{appendix}{appendices}
\numberwithin{equation}{section}
\newtheorem{proposition}{Proposition}[section]
\title{\Large Nonexistence of traveling wave solutions in the fractional Rosenau-Hyman equation via homotopy perturbation method\vspace{-1ex}}
\author{Brian Choi \vspace{-2ex}}
\author{Brian Choi\thanks{Corresponding author. United States Military Academy, \texttt{choigh@bu.edu}}
\vspace{-2ex}  }
\date{}
\begin{document}
\maketitle\vspace{-4ex}
\maketitle
\begin{abstract}
We apply the homotopy perturbation method to construct series solutions for the fractional Rosenau-Hyman (fRH) equation and study their dynamics. Unlike the classical RH equation where compactons arise from truncated periodic solutions, we show that spatial nonlocality prevents the existence of compactons, and therefore periodic traveling waves are considered. By asymptotic analyses involving the Mittag-Leffler function, it is shown that the quadratic fRH equation exhibits bifurcation with respect to the order of the temporal fractional derivative, leading to the eventual pinning of wave propagation. Additionally, numerical results suggest potential finite time blow-up in the cubic fRH. While HPM proves effective in constructing analytic solutions, we identify cases of divergence, underscoring the need for further research into its convergence properties and broader applicability.
\end{abstract}
\smallskip
\noindent \textbf{Keywords.} Nonlinear dispersion, fractional PDE, KdV, Rosenau-Hyman model, homotopy perturbation

\noindent \textbf{MSC 2020} 35C05, 35C07, 35C08, 35C09, 35C10, 35Q53, 35B10, 35B20 


\section{Introduction}

Fractional differential equations (FDEs) have emerged as a powerful tool in modeling complex systems arising from nonlocal spatiotemporal interactions. While classical integer-order differential equations fall short by assuming instantaneous rates of change, FDEs account for memory effects and long-range interactions, making them particularly well-suited for describing anomalous diffusion \cite{metzler2000random}, viscoelasticity \cite{mainardi2022fractional}, and various phenomena in engineering, physics, and biology \cite{ionescu2017role} with applications in diverse fields, including fluid mechanics \cite{kulish2002application}, control theory \cite{monje2010fractional}, and signal processing \cite{magin2011fractional}, where long-range dependencies play a crucial role. Despite significant progress, the development of analytical and numerical methods for solving FDEs remains an ongoing challenge, particularly in nonlinear wave dynamics, keeping this field an active area of research in applied mathematics.

The homotopy perturbation method (HPM), an algorithmically simple method in obtaining exact series solutions, has gained significant attention for its efficiency in handling nonlinear problems. HPM, introduced in \cite{he1999homotopy,he2000coupling}, combines the homotopy technique with traditional perturbation methods to construct approximate solutions through a rapidly convergent series expansion; see \Cref{background} for details. Unlike conventional perturbation methods that require small parameters, HPM does not impose restrictive assumptions. Over the years, researchers have applied HPM to a wide range of FDEs: time-fractional delay FDEs \cite{farhood2023homotopy}, time-fractional nonlinear Schr\"odinger equation \cite{agheli2017analytical}, KdV-like equations \cite{anil2011he,yildirim2010analytical}, heat transfer \cite{ganji2006application}, and time-fractional predator-prey model \cite{owolabi2024laplace}, among others. See references therein for more applications of HPM.

The classical Rosenau-Hyman (RH) equation, introduced to study pattern formation in liquid drop dynamics \cite{rosenau1993compactons,rosenau2000compact,rosenau2005almost} by incorporating degenerate nonlinear dispersion, admits periodic wave trains, and under suitable truncations, these periodic structures can be reinterpreted as solitary waves with compact spatial support (compactons). One of the fundamental mechanisms that gives rise to the compact structure is the interplay between degenerate nonlinear dispersion and nonlinear convection, which is in stark contrast to the KdV-type equations, governed by linear dispersion relations, that give rise to solitons with exponential tails. The absence of these exponential tails in RH compactons ensures that energy propagation remains strictly localized, well-suited for physical systems designed to minimize energy dissipation or radiation.

In this paper, we demonstrate that HPM is a robust and efficient analytical method (see \Cref{method} for brief descriptions on other methods) that effectively handles the nonlocal and nonlinear structure of the fractional Rosenau-Hyman equation (fRH)
\begin{equation}\label{k22}
    \partial_t^\beta u + \partial_x (u^n) - \partial_x (-\partial_{xx})^{\frac{\alpha}{2}} (u^n) = 0,\ \alpha \in (0,2],\ \beta \in (0,1], 
\end{equation}
where the classical RH equation, or $K(n,n)$, is recovered at $\alpha = 2,\ \beta = 1$; see \Cref{background} for definitions. Our main contributions are as follows: 1. construction of analytic series solutions to the fractional $K(n,n),\ n=2,3$, convergent in time; 2. proof of nonexistence of traveling waves at uniform speeds under spatial nonlocality; 3. rigorous and numerical studies of both short-time and long-time behavior of the fRH equations with emphases on bifurcation with respect to nonlocal parameters that shows drastically distinct behavior relative to the classical RH equation. As such, we apply HPM to understand the qualitative behavior of fRH equations. The application of HPM is motivated by \cite{odibat2008compact2,odibat2007solitary,momani2007homotopy,odibat2008compact} where our construction of exact solutions directly extends \cite{odibat2008compact2,odibat2007solitary}. What is not studied in the aforementioned references, however, is the dynamical behavior of the solutions to fRH, in which we fill the gap in the literature. Most notably, for the fractional $K(2,2)$, it is not only shown that the phase evolution of periodic waves is sublinear in time for $t \ll 1$, consistent with the previous work, but is also shown that the traveling of nonlinear waves becomes pinned as $t \rightarrow \infty$ where the long-time dynamics undergoes bifurcation as $\beta$ crosses $\frac{1}{2}$ (see \Cref{dynamics_k22}). Similar phenomena of varying propagation speeds depending on the order of the Caputo derivative were studied on the fractional diffusion-wave equation \cite{luchko2013fractional,luchko2013propagation}. For the fractional $K(3,3)$, a numerical evidence of finite time blow-up is given, in stark contrast to the classical $K(3,3)$.

This paper is organized as follows. In \Cref{background2}, a revision of fractional calculus is given along with a proof of nonexistence of traveling waves under the Riesz derivative. In \Cref{method2}, HPM is revised. A numerical evidence of divergence of HPM in certain regimes is provided, calling for further research of the method's convergence and applicability. In \Cref{fk22,fk33}, series solutions are constructed with which their dynamical properties are studied.
\section{Mathematical background}\label{background2}
\subsection{Fractional calculus}\label{background}
A brief review of fractional calculus is presented for readers' convenience. See \cite{podlubny1998fractional,kilbas2006theory,mainardi2022fractional,baleanu2012fractional} for more thorough discussion. Key notions include, among others, fractional integrals, derivatives, and the Mittag-Leffler function. For simplicity, we assume that functions are sufficiently smooth with appropriate decay.

For $f:[0,\infty) \rightarrow \mathbb{R}$ and $\gamma,\beta > 0$, define the fractional integral and the Caputo derivative, respectively, as
\begin{equation*}
\begin{split}
J^\gamma f (t) &= \frac{1}{\Gamma(\gamma)} \int_0^t f(\tau) (t-\tau)^{\gamma-1}d\tau,\\
\partial_t^\beta f (t) &= J^{\lceil \beta \rceil- \beta}\frac{d^{\lceil \beta \rceil}}{dt^{\lceil \beta \rceil}} f (t) = \frac{1}{\Gamma(\lceil \beta \rceil- \beta)} \int_0^t \frac{d^{\lceil \beta \rceil}f}{dt^{\lceil \beta \rceil}} (\tau) (t-\tau)^{\lceil \beta \rceil- \beta-1} d\tau.
\end{split}
\end{equation*}
The scalar-valued FDE $\partial_t^\beta u = F(u,t)$ is formally equivalent to, and mathematically valid with certain hypotheses (see \cite{daftardar2007analysis} for precise statements),
\begin{equation}\label{fde}
    u(t) = u(0) + \frac{1}{\Gamma(\beta)}\int_0^t F(u(\tau),\tau)(t-\tau)^{\beta - 1} d\tau.
\end{equation}
The Caputo derivative is often used to measure memory effects that depend on the history of the time-dependent system. In the spatial domain $\mathbb{R}^d$ where there exists no preferred direction, the Riesz derivative is often used to model nonlocality, measured by $\alpha \in (0,2]$, by extending the Laplacian via the Fourier transform as
\begin{equation*}
    \mathcal{F}[(-\Delta)^{\frac{\alpha}{2}} f] (\xi) = |\xi|^{\alpha} \mathcal{F} [f] (\xi),\ \xi \in \mathbb{R}^d.
\end{equation*}
The Mittag-Leffler function defined as $E_{\alpha,\beta}(z) = \sum\limits_{n=0}^{\infty} \frac{z^n}{\Gamma(\alpha n + \beta)}$ is an entire function for $\alpha,\beta > 0$ where for shorthand, $E_{\alpha}(z) := E_{\alpha,1}(z)$. If $
\alpha \in (0,2),\ \beta \in \mathbb{C}$, then it obeys the asymptotic formula \cite[Theorem 1.4]{podlubny1998fractional}
\begin{equation}\label{ml_decay}
    E_{\alpha,\beta}(-x) =  \sum_{k=1}^{N}\frac{x^{-k}}{\Gamma(\beta - \alpha k)} +O_N(|x|^{-(N+1)}),\ x>0,\ x \rightarrow \infty,
\end{equation}
for any positive integer $N$. For $t>0$, it is known that $E_{\alpha,\beta}(-t^\alpha)$ is completely monotone for $\alpha,\beta > 0$ if and only if $0 < \alpha \leq 1,\ \beta \geq \alpha$. Due to complete monotonicity, $E_{\alpha}(-t^\alpha)$ has no zeros when $\alpha \in (0,1]$. If $\alpha \in (1,2)$, the function has finitely many zeros where the number of zeros increases to infinity as $\alpha \rightarrow 2$; note that $E_2(-t^2) = \cos (t^2)$. If $\alpha \in [2,\infty)$, the function has infinitely many zeros. See \cite{rae/1337001380,merkle2014completely} for more details on complete monotonicity and \cite{duan2014zeros} on the zeros of the Mittag-Leffler function.
\subsection{Nonexistence of traveling compactons}
Generally, compactons are not preserved under the flows generated by nonlocal evolution operators, and therefore are not considered in this paper; instead, we consider spatially periodic solutions. By \Cref{no_travel}, we consider the trial functions \eqref{ansatz1}, \eqref{ansatz2} in $K(2,2)$ and $K(3,3)$, respectively, as periodic functions, not as compactly-supported functions, when applying HPM.
\begin{proposition}\label{no_travel}
    For $m,n \in \mathbb{N}$ and $\alpha \in (0,2)$, consider
    \begin{equation}\label{paley-wiener}
    \partial_t u + \partial_x (u^m) - \partial_x (-\Delta)^{\frac{\alpha}{2}} (u^n) = 0.
    \end{equation}
If there exists $f \in L^1(\mathbb{R}) \cap L^\infty(\mathbb{R})$ that is compactly supported, and $u(x,t) = f(x-ct)$ where $c \in \mathbb{R}$, then $f = 0$.
\end{proposition}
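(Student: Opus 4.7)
The plan is to reduce the traveling-wave ansatz to a scalar equation in $\xi = x - ct$, pass to Fourier space, and derive a contradiction from the branch-point nonanalyticity of $|\xi|^\alpha$ at the origin via an analytic continuation argument.

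First, I would substitute $u(x,t) = f(\xi)$ into \eqref{paley-wiener} and integrate once in $\xi$, yielding
\begin{equation*}
f^m(\xi) - c f(\xi) - (-\Delta)^{\alpha/2} f^n(\xi) = C
\end{equation*}
for some constant $C$. Since $f$ is compactly supported, both $f$ and $f^m$ vanish for $|\xi|$ large, while the singular-integral representation of the Riesz derivative available for $\alpha \in (0,2)$ gives the decay $(-\Delta)^{\alpha/2} f^n(\xi) = O(|\xi|^{-1-\alpha})$ as $|\xi| \to \infty$. Sending $|\xi|\to\infty$ forces $C = 0$, leaving the reduced equation $f^m - cf = (-\Delta)^{\alpha/2} f^n$.

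Next I would pass to Fourier space. Since $f \in L^1 \cap L^\infty$ is compactly supported, so are $f^m - cf$ and $f^n$; by the Paley-Wiener theorem their Fourier transforms extend to entire functions $F$ and $g$ of exponential type. The reduced equation becomes the pointwise identity
\begin{equation*}
F(\xi) = |\xi|^\alpha g(\xi), \qquad \xi \in \mathbb{R}.
\end{equation*}
The heart of the argument is now an analytic continuation. For $\xi > 0$ one has $F(\xi) = \xi^\alpha g(\xi)$, and the right-hand side extends analytically to $\mathbb{C} \setminus (-\infty, 0]$ via the principal branch of $z \mapsto z^\alpha$. By the identity theorem this extension agrees with $F$ on the slit plane. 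Letting $z \to \xi + i0^+$ for $\xi < 0$, the principal branch evaluates to $(\xi + i0^+)^\alpha = |\xi|^\alpha e^{i\pi\alpha}$, giving $F(\xi) = |\xi|^\alpha e^{i\pi\alpha} g(\xi)$. Comparing with the real-line identity $F(\xi) = |\xi|^\alpha g(\xi)$ for $\xi < 0$ yields $(1 - e^{i\pi\alpha}) |\xi|^\alpha g(\xi) = 0$ on $(-\infty, 0)$. Since $\alpha \in (0, 2)$ ensures $e^{i\pi\alpha} \neq 1$, we deduce $g \equiv 0$ on $(-\infty, 0)$, and by entirety of $g$ we conclude $g \equiv 0$. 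Hence $\widehat{f^n} \equiv 0$, so $f^n \equiv 0$ and $f = 0$.

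The main technical hurdle is the branch-cut calculus: setting up the principal branch and justifying the boundary-value computation $(\xi + i0^+)^\alpha = |\xi|^\alpha e^{i\pi\alpha}$, which is standard complex analysis but must be carried out carefully. A secondary subtlety is the vanishing of the integration constant $C$, which rests on the explicit pointwise decay of $(-\Delta)^{\alpha/2} f^n$ at infinity for compactly supported $f^n$. An alternative to the branch-cut argument, worth noting, is a direct $C^\infty$-smoothness argument at the origin: writing $g(\xi) = \xi^k h(\xi)$ near $0$ with $h$ entire and $h(0) \neq 0$, the factor $|\xi|^\alpha \xi^k$ fails to be $C^\infty$ at $0$ for every $\alpha \in (0,2)$ and $k \in \mathbb{N}_0$, again contradicting entirety of $F$.
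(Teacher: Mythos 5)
Your proposal is correct and follows essentially the same route as the paper: integrate the profile equation once (the constant vanishing by compact support), pass to Fourier space, invoke Paley--Wiener to get entire functions $F$ and $g$ with $F(\xi)=|\xi|^\alpha g(\xi)$, and derive a contradiction from the non-analyticity of $|\xi|^\alpha$ for $\alpha\in(0,2)$. The only difference is that you carry out the final step explicitly via the branch-cut computation to conclude $g\equiv 0$, whereas the paper compresses it into the one-line observation that $|\xi|^\alpha$ would otherwise admit a meromorphic extension; your version is, if anything, slightly more careful.
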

\begin{proof}
Since $(-\Delta)^{\frac{\alpha}{2}}$ commutes with translations, substituting $u(x,t)$ into \eqref{paley-wiener}, we obtain
\begin{equation*}
    -c f + f^m - (-\Delta)^{\frac{\alpha}{2}} (f^n) = 0,
\end{equation*}
where the constant of integration is zero by the compactness of the support of $f$ and the absolute integrability. Taking the Fourier transform, we have
\begin{equation}\label{fourier}
    |\xi|^\alpha \mathcal{F}[f^n](\xi) = \mathcal{F}[-cf + f^m].
\end{equation}
Since the Fourier transform of compactly supported distributions is entire by the Paley-Wiener theorem, \eqref{fourier} implies $|\xi|^\alpha$ has a meromorphic extension, a contradiction if $\alpha \in (0,2)$.    
\end{proof}
\section{Homotopy perturbation method}\label{method2}
\subsection{Method}\label{method}
The homotopy perturbation method (HPM) is briefly summarized with remarks on convergence/divergence. See \cite{he1999homotopy,he2000coupling} for an elaboration of HPM. Let $X,Y$ be Banach spaces that consist of functions defined on some domain $\Omega$ and let $L,N: X \rightarrow Y$ be continuous linear and nonlinear operators, respectively. Given a trial function $u_0 \in X$ and $f \in Y$, the main idea is to solve the boundary value problem
\begin{equation}\label{bvp}
\begin{split}
L u + N(u) &= f(\mathbf{r}),\ \mathbf{r} \in \Omega,\\
B\left(u,\frac{\partial u}{\partial n}\right) &= 0,\ \mathbf{r} \in \partial \Omega,    
\end{split}
\end{equation}
by defining a homotopy $H: X \times [0,1] \rightarrow Y$ given by
\begin{equation*}
    H(v,p) = (1-p)(Lv - Lu_0) + p (Lv + N(v) - f),
\end{equation*}
where we remark that there exist alternative constructions of homotopies \cite{momani2007homotopy,odibat2008modified}. Observe that $Lv - Lu_0$ and $Lv + N(v) - f$ are homotopic. Considering the zero set $\{H(v,p) = 0\}$, we wish to define a map $p \mapsto v(p)$ such that $H(v(p),p) = 0$. For $p=0$, take $v(0) = u_0$. Considering $p$ as the perturbation parameter, take the series ansatz
\begin{equation}\label{ansatz_powerseries}
    v(p) = \sum_{n=0}^\infty u_n p^n.
\end{equation}
Substitute \eqref{ansatz_powerseries} into $H(v,p) = 0$, and match coefficients of $p^n,\ n=0,1,\dots,$ to determine $u_n \in X$. It is crucial to show that the radius of convergence, $r_{conv}$, of the power series is at least $1$. If $r_{conv} > 1$, it follows from standard analysis that $u := v(1) = \sum\limits_{n=0}^{\infty} u_n \in X$ is absolutely convergent and is a solution of \eqref{bvp}; if $r_{conv} = 1$, then it needs to be shown that $\lim\limits_{p \rightarrow 1-} v(p)$ exists and is a solution of \eqref{bvp}.

Several other analytical methods have been developed to obtain exact series solutions to FDEs, including the variational iteration method (VIM) \cite{he2007variational}, the Adomian decomposition method (ADM) \cite{adomian2013solving}, and the homotopy analysis method (HAM) \cite{liao2003beyond}. While VIM effectively refines approximate solutions through a correction functional, it requires constructing appropriate Lagrange multipliers, which can be computationally challenging for certain fractional models; ADM, known for its straightforward recursive decomposition of nonlinear terms, may incur computational difficulties in explicitly determining the Adomian polynomials; HAM, though a powerful generalization of homotopy-based approaches, introduces an auxiliary parameter that requires careful tuning to ensure convergence, potentially adding to computational complexity.
\subsection{Divergence of HPM}
While the convergence analysis of \eqref{ansatz_powerseries} under various conditions were studied in \cite{he1999homotopy,he2000coupling,biazar2009convergence,elbeleze2014note}, we show a limitation of the method by giving an example that illustrates numerical divergence of HPM. Consider the time-fractional KdV equation (fKdV) with the one-soliton solution (of the KdV equation) traveling at the unit speed as a trial function. More precisely, let $\beta \in (0,1)$ and consider
\begin{equation*}
    \partial_t^\beta u + \partial_x^3 u + \partial_x \left(\frac{u^2}{2}\right) = 0,\ u_0 = 3 \sech^2\left(\frac{x}{2}\right).
\end{equation*}
Applying HPM with $L v = \partial_t^\beta v,\ N(v) = \partial_x^3 v + \partial_x \left(\frac{v^2}{2}\right)$ and matching coefficients of $p^n$, we have
\begin{equation*}
\begin{split}
    \partial_t^\beta u_0 &= 0,\ u_0(x,0) = 3 \sech^2 \left(\frac{x}{2}\right),\\
    \partial_t^\beta u_{n+1} &= -\partial_x^3 u_n - \frac{1}{2} \partial_x w_n,\ u_{n+1}(x,0) = 0,\ n = 0,1,\dots,\\
    w_n &= \sum\limits_{k=0}^{n} u_k u_{n-k}.
\end{split}
\end{equation*}
Integrating in $t$ by \eqref{fde}, the coefficients are given by
\begin{equation*}
\begin{split}
    u_0(x,t) &= 3 \sech^2 \left(\frac{x}{2}\right),\\
    u_{n+1}(x,t) &= -\frac{1}{\Gamma(\beta)} \int_0^t \left(\partial_x^3 u_n(x,\tau) + \frac{1}{2} \partial_x w_n (x,\tau) \right)(t-\tau)^{\beta - 1} d\tau.    
\end{split}
\end{equation*}
Few iterates are computed explicitly using Mathematica. Since the exact expression of $u_n$ for $n > 3$ becomes increasingly complicated, they are omitted for the sake of clarity.
\begin{figure}[h!]
    \centering
    \begin{subfigure}[b]{0.45\textwidth}
        \centering
        \includegraphics[width=\textwidth]{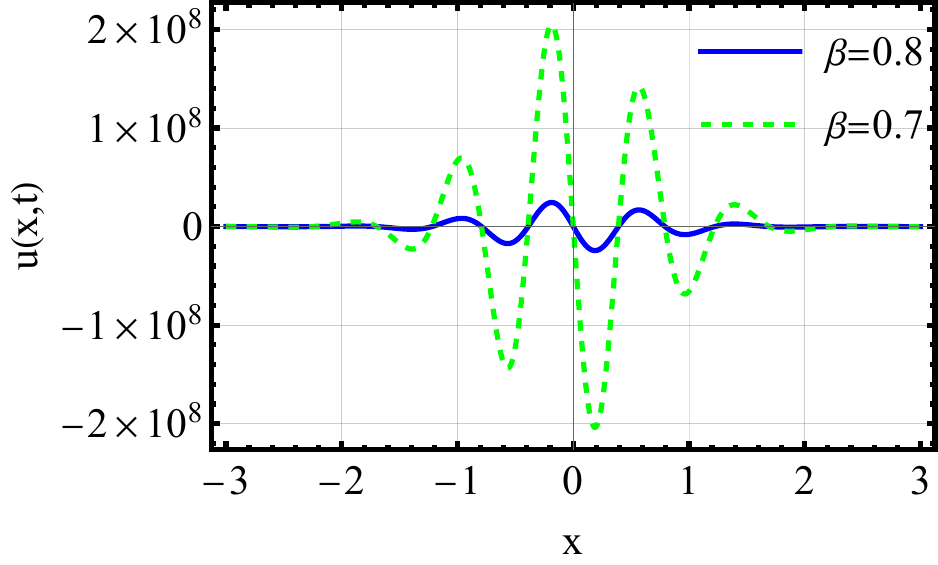}        
    \end{subfigure}
    \hfill
    \begin{subfigure}[b]{0.45\textwidth}
        \centering
        \includegraphics[width=\textwidth]{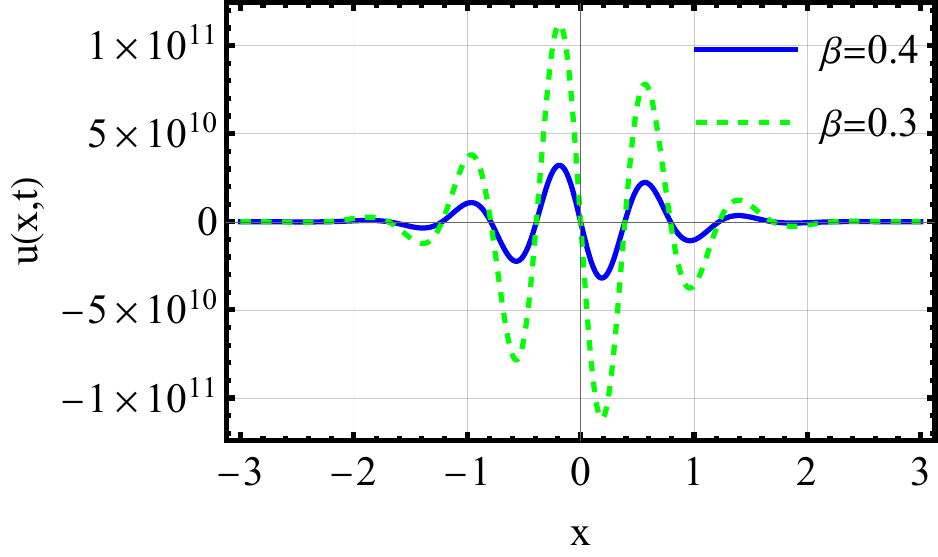}        
    \end{subfigure}
    \caption{Wave profiles approximated by $\sum\limits_{n=0}^{9} u_n(x,t)$ for multiple values of $\beta$ at $t=1$.}
    \label{fig:side_by_side}
\end{figure}
\begin{equation*}
\begin{split}
    u_1(x, t) &= \frac{24 \, t^\beta \, \text{csch}^3(x) \, \sinh^4\left(\frac{x}{2}\right)}{\Gamma(1 + \beta)},\\
    u_2(x, t) &= \frac{3 \, t^{2\beta} \, (-2 + \cosh(x)) \, \text{sech}^4\left(\frac{x}{2}\right)}{2 \, \Gamma(1 + 2\beta)},\\
    u_3(x, t) &= \frac{3 \, t^{3\beta} \, 
    \left[ (39 - 32 \cosh(x) + \cosh(2x)) \, \Gamma(1 + \beta)^2 + 
    12 \, (-2 + \cosh(x)) \, \Gamma(1 + 2\beta) \right] \, 
    \text{sech}^6\left(\frac{x}{2}\right) \, \tanh\left(\frac{x}{2}\right)}
    {8 \, \Gamma(1 + \beta)^2 \, \Gamma(1 + 3\beta)}.    
\end{split}   
\end{equation*}
\begin{figure}[H]
    \centering
    \begin{subfigure}[b]{0.45\textwidth}
        \centering
        \includegraphics[width=\textwidth]{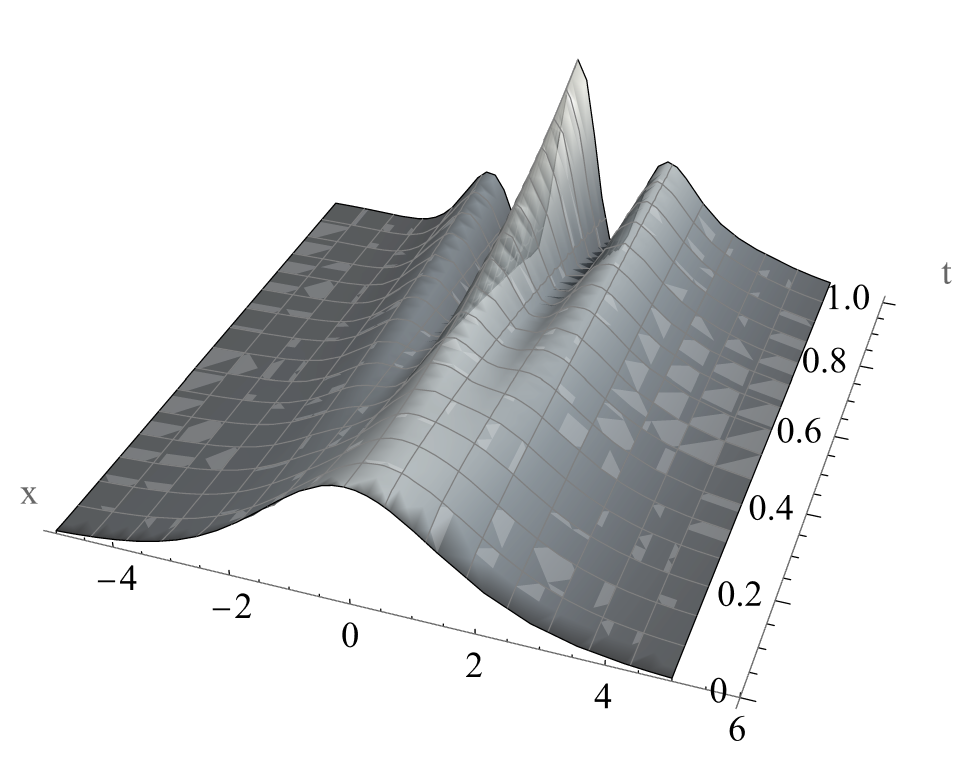}        
    \end{subfigure}
    \hfill
    \begin{subfigure}[b]{0.45\textwidth}
        \centering
        \includegraphics[width=\textwidth]{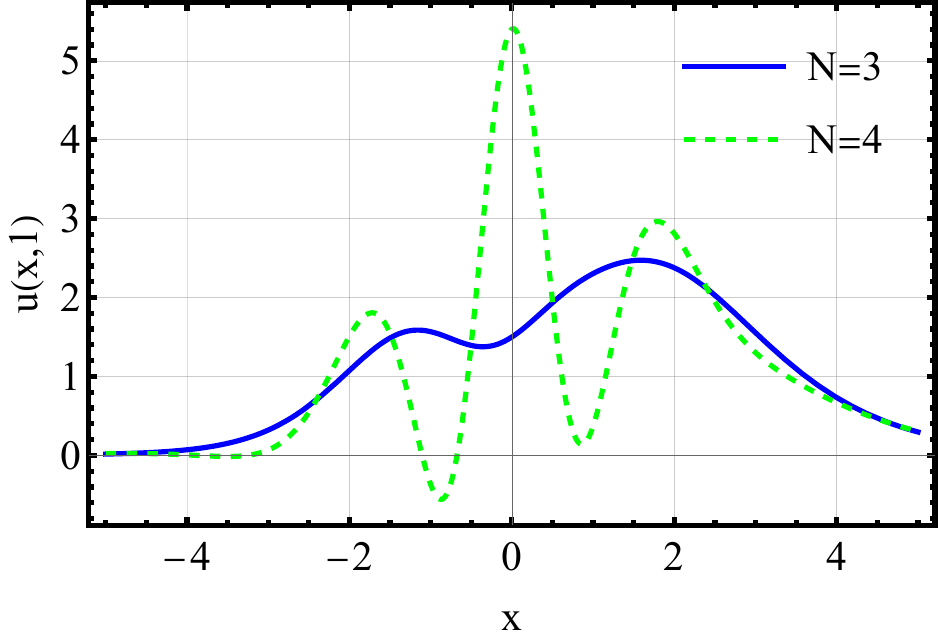}        
    \end{subfigure}
    \caption{Left: 3D plot of $\sum\limits_{n=0}^{4} u_n(x,t)$. Right: wave profiles approximated by $\sum\limits_{n=0}^{N} u_n(x,1)$ for $N=3,4$. Both plots use $\beta = 0.5$.}
    \label{fig:bifurcation}
\end{figure}
The solution profiles approximated by $\sum\limits_{n=0}^{N} u_n(x,1)$ for $N=9$ and multiple values of $\beta$ are given in \Cref{fig:side_by_side}. Note the increase of magnitude from $\sim 1$ at $t=0$ to $\sim 10^8, 10^{11}$ at $t=1$ in the figure. A 3D plot for $N=4$ and the solution profiles for $N=3,4$ at $t=1$ are given in \Cref{fig:bifurcation}. Observe the emergence of $2$ peaks for $N=3$ and $3$ peaks for $N=4$. This vastly distinct behavior of the solutions for multiple $\beta$ and $N$ raises doubts on the convergence of HPM.
\begin{table}[H]
    \centering    
    \begin{tabular}{lcccc|cccc}
        \toprule
        & \multicolumn{4}{c|}{$\mathbf{\| u_6 \| / \| u_5 \|}$} & \multicolumn{4}{c}{$\mathbf{\| u_7 \| / \| u_6 \|}$} \\ 
        \cmidrule(lr){2-5} \cmidrule(lr){6-9}
        $\mathbf{t}$ & \( \beta = 0.2 \) & \( \beta = 0.4 \) & \( \beta = 0.6 \) & \( \beta = 0.8 \) 
        & \( \beta = 0.2 \) & \( \beta = 0.4 \) & \( \beta = 0.6 \) & \( \beta = 0.8 \) \\ \midrule
        0.01 & 34.98 & 10.31 & 2.76 & 0.70 & 53.42 & 15.23 & 3.92 & 0.94 \\
       
        0.21 & 64.31 & 34.85 & 17.12 & 7.94 & 98.21 & 51.48 & 24.38 & 10.79 \\
        
        0.41 & 73.51 & 45.55 & 25.58 & 13.56 & 112.27 & 67.27 & 36.42 & 18.43 \\
        
        0.61 & 79.59 & 53.39 & 32.47 & 18.64 & 121.55 & 78.86 & 46.22 & 25.33 \\
        
        0.81 & 84.24 & 59.81 & 38.49 & 23.38 & 128.64 & 88.33 & 54.79 & 31.78 \\ \midrule
         
        & \multicolumn{4}{c|}{$\mathbf{\| u_8 \| / \| u_7 \|}$} & \multicolumn{4}{c}{$\mathbf{\| u_9 \| / \| u_8 \|}$} \\ 
        \cmidrule(lr){2-5} \cmidrule(lr){6-9}
        $\mathbf{t}$ & \( \beta = 0.2 \) & \( \beta = 0.4 \) & \( \beta = 0.6 \) & \( \beta = 0.8 \) 
        & \( \beta = 0.2 \) & \( \beta = 0.4 \) & \( \beta = 0.6 \) & \( \beta = 0.8 \) \\ \midrule
        0.01 & 94.71 & 26.26 & 6.57 & 1.54 & 127.74 & 34.57 & 8.44 & 1.93 \\
       
        0.21 & 174.12 & 88.76 & 40.85 & 17.58 & 234.84 & 116.84 & 52.45 & 22.01 \\
        
        0.41 & 199.04 & 116.00 & 61.03 & 30.02 & 268.46 & 152.69 & 78.36 & 37.59 \\
       
        0.61 & 215.51 & 135.98 & 77.46 & 41.25 & 290.66 & 178.99 & 99.45 & 51.65 \\
       
        0.81 & 228.08 & 152.32 & 91.83 & 51.75 & 307.62 & 200.49 & 117.90 & 64.81 \\ \bottomrule
         
    \end{tabular}
    \caption{The evolution of ratios $\frac{\| u_{n+1} \|}{\| u_n \|}$ under varying $\beta$ where $\| \cdot \|$ denotes the sup norm in $x \in \mathbb{R}$.}
    \label{tab:four_blocks}
\end{table}
If $\beta = 1$, then it can be directly verified that HPM recovers the KdV one-soliton, or i.e.,
\begin{equation}\label{hpm_kdv}
    3 \sech^2 \left(\frac{x-t}{2}\right) = \sum_{n=0}^{\infty} u_n(x,t).
\end{equation}
However for any $\beta \in (0,1)$, our numerical computation suggests the series diverges. \Cref{tab:four_blocks} contains four datasets of the ratios of norms where $\| u_n \|$ denotes the sup norm in $x \in \mathbb{R}$ for multiple values of $t,\beta$. If HPM were convergent, then the ratios are expected to decay in $n$ below $1$ by the ratio test. However our numerical computation suggests the ratios blow up monotonically for any $t>0,\ \beta \in (0,1)$, indicating divergence of $\sum\limits_{n=0}^\infty u_n$. As observed, the ratios blow up faster for lower values of $\beta$; HPM provides a convergent method for $\beta = 1$ by \eqref{hpm_kdv}. Similarly the ratios are larger for $t \gg 1$ since this application of HPM used $u_0$ as the initial condition of the KdV one-soliton. 

Our result needs to be compared to the previous results in \cite{wang2007homotopy,momani2005explicit}. The validity of HPM was claimed in \cite{wang2007homotopy} in the context of fKdV. Numerical solutions of fKdV were given in \cite{momani2005explicit} via the Adomian Decomposition Method (ADM), suggesting a bifurcation caused by the Caputo derivative, as a one-soliton breaks into multiple localized structures as $t$ evolves, similar to the right plot of \Cref{fig:bifurcation}. However both papers did not show convergence explicitly. Considering that HPM and ADM are both flexible methods that apply to a wide class of differential equations, more investigation is needed in obtaining accurate numerical solutions to fKdV.
\section{Fractional K(2,2)}\label{fk22}
In this section, HPM is applied to the fractional $K(2,2)$ equation. For \eqref{k22}, let 
\begin{equation*}
Lu = \partial_t^\beta u;\ N(u) = \partial_x (u^2) - \partial_x (-\partial_{xx})^{\frac{\alpha}{2}} (u^2),    
\end{equation*}
and
\begin{equation}\label{ansatz1}
    u_0 = \frac{4c}{3} \cos^2\left(\frac{x}{4}\right).
\end{equation}
It is known that $\frac{4c}{3} \cos^2\left(\frac{x-ct}{4}\right)$ is a traveling wave solution to the classical $K(2,2)$. Applying HPM, we have
\begin{equation*}
\partial_t^\beta v + p \left(\partial_x (v^2) - \partial_x (-\partial_{xx})^{\frac{\alpha}{2}} (v^2) \right) = 0.
\end{equation*}
The series ansatz \eqref{ansatz_powerseries} yields a recurrence relation
\begin{equation}\label{recurrence_k22}
\partial_t^\beta u_{n+1} = -\partial_x w_n + \partial_x (-\partial_{xx})^{\frac{\alpha}{2}} w_n,\ u_{n+1}(x,0) = 0,\ n = 0,1,\dots,   
\end{equation}
where $w_n = \sum\limits_{k=0}^{n} u_k u_{n-k}$. Integrating \eqref{recurrence_k22} in $t$ by \eqref{fde} and using $u_0$, all higher coefficients are computed explicitly as
\begin{equation}\label{k22_un}
\begin{split}
u_n(x,t) &= (-1)^{\lfloor \frac{n}{2} \rfloor} \frac{2^{n+1-n\alpha}(2^\alpha - 1)^n c^{n+1} t^{\beta n}}{3^{n+1} \Gamma(1+\beta n)} \psi\left(\frac{x}{2}\right),\ n \geq 1,\\
\psi(x) &=
\begin{cases}
\cos (x), & \text{if n is even},\\
\sin (x), & \text{if n is odd}.
\end{cases}
\end{split}
\end{equation}
The magnitudes of $u_n$ decay sufficiently fast as $n$ increases, and it can be verified that the series is absolutely convergent at $p=1$. Let $\gamma = \frac{2^{1-\alpha} (2^\alpha - 1)}{3}$. Then,
\begin{equation}\label{k22_solution}
u(x,t) = \sum_{n=0}^\infty u_n(x,t)=\frac{2c}{3} \biggl\{ 1 + 
    \cos\left(\frac{x}{2}\right) 
    E_{2\beta}\left(- (\gamma c t^\beta)^2 \right) + 
    \sin\left(\frac{x}{2}\right)\gamma c t^\beta
    E_{2\beta, 1+\beta}\left(-(\gamma c t^\beta)^2 \right)\biggr\}
\end{equation}
for any $x,t \in \mathbb{R}$. If $\alpha = 2,\ \beta = 1$, then \eqref{k22_solution} $= \frac{4c}{3} \cos^2\left(\frac{x-ct}{4}\right)$, the traveling wave solution for the classical $K(2,2)$. Note that the solution is stationary if $\alpha = 0$, which is consistent by formally substituting $\alpha = 0$ to \eqref{k22}. Without loss of generality, assume $c = 1,\ \alpha = 2$ by scaling the time variable as $T = (\gamma c)^{\frac{1}{\beta}}t$. Hence the Caputo derivative in time plays a pivotal role in the dynamics of the fractional $K(2,2)$.
\subsection{Dynamics of fractional $K(2,2)$}\label{dynamics_k22}
We investigate the role of $\beta$ in the dynamics of the fractional $K(2,2)$ equation. By elementary trigonometry, we have
\begin{equation}\label{def}
\begin{split}
\eqref{k22_solution} &= \frac{2}{3} \left(1 + A(t) \cos\left( \frac{x - \phi(t)}{2}\right)\right),\\
A(t) &:= \left(E_{2\beta}^2\left(-\frac{t^{2\beta}}{4}\right) + \frac{t^{2\beta}}{4}E_{2\beta,1+\beta}^2\left(-\frac{t^{2\beta}}{4}\right)\right)^{1/2},\\
\tan\left(\frac{\phi(t)}{2}\right) &:= \frac{t^\beta E_{2\beta,1+\beta}\left(-\frac{t^{2\beta}}{4}\right)}{2 E_{2\beta}\left(-\frac{t^{2\beta}}{4}\right)},
\end{split}
\end{equation}
where we define $\phi$ taking values in $\mathbb{R}$ with the condition $\phi(0) = 0$ to determine the phase uniquely. Observe that the solution is spatially periodic for any $t$. By direct computation, $A(t) = 1,\ \phi(t) = t$ if $\beta = 1$, consistent with the exact solution (traveling wave) of the classical $K(2,2)$ equation.
\subsubsection{Decay of amplitude}
We discuss the role of $\beta$ in the decay of $A(t)$ with short-time asymptotics and global decay estimates, assisted by numerical results.
\begin{proposition}
For $t \ll 1$ and $\beta \in (0,1)$, $A(t)$ decreases. More precisely, as $t \rightarrow 0+$,
    \begin{equation}\label{asymp_short}
        A(t) = 1 - \frac{t^{2\beta}}{8} \left(\frac{2}{\Gamma(1+2\beta) }- \frac{1}{\Gamma(1+\beta)^2}\right) + O(t^{4\beta}).
    \end{equation}    
Furthermore, there exist $C, t_0 >0$ such that for any $t \geq t_0$,
\begin{equation}\label{long-time}
\left|u(x,t) - \frac{2}{3} \right| \leq \frac{C t^{-\beta}}{\Gamma(1-\beta)}.    
\end{equation}
\end{proposition}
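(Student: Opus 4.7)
The plan is to derive both bounds from the two asymptotic regimes of the Mittag-Leffler function: the defining power series from \Cref{background} for $t \ll 1$, and the far-field expansion \eqref{ml_decay} for $t \gg 1$. In both regimes, the estimates for $A(t)$ and $u(x,t) - \tfrac{2}{3}$ reduce to elementary manipulations of these expansions, together with one Gamma-function inequality needed to fix the sign of the short-time leading coefficient.

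For the short-time expansion, I would substitute
\[
E_{2\beta}\!\left(-\tfrac{t^{2\beta}}{4}\right) = 1 - \frac{t^{2\beta}}{4\Gamma(1+2\beta)} + O(t^{4\beta}), \qquad E_{2\beta,1+\beta}\!\left(-\tfrac{t^{2\beta}}{4}\right) = \frac{1}{\Gamma(1+\beta)} + O(t^{2\beta}),
\]
into the definition \eqref{def}, square term-by-term, and collect to get
\[
A(t)^2 = 1 - \frac{t^{2\beta}}{4}\!\left(\frac{2}{\Gamma(1+2\beta)} - \frac{1}{\Gamma(1+\beta)^2}\right) + O(t^{4\beta}).
\]
Applying $\sqrt{1+s} = 1 + s/2 + O(s^2)$ then yields \eqref{asymp_short}. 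Monotonic decrease near $t = 0$ follows once the leading coefficient is shown strictly positive on $\beta \in (0,1)$. Setting $f(\beta) := \log\Gamma(1+2\beta) - 2\log\Gamma(1+\beta)$, I would note $f(0) = 0$, $f(1) = \log 2$, and $f'(\beta) = 2\psi(1+2\beta) - 2\psi(1+\beta) > 0$ by the strict monotonicity of the digamma $\psi$ on $(0,\infty)$, so $f(\beta) < \log 2$, i.e.\ $2\Gamma(1+\beta)^2 > \Gamma(1+2\beta)$, for $\beta \in (0,1)$.

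For the long-time bound \eqref{long-time}, I would apply \eqref{ml_decay} with $N = 1$ to obtain
\[
E_{2\beta}\!\left(-\tfrac{t^{2\beta}}{4}\right) = \frac{4 t^{-2\beta}}{\Gamma(1 - 2\beta)} + O(t^{-4\beta}), \qquad E_{2\beta,1+\beta}\!\left(-\tfrac{t^{2\beta}}{4}\right) = \frac{4 t^{-2\beta}}{\Gamma(1 - \beta)} + O(t^{-4\beta})
\]
as $t \to \infty$. Substituting into \eqref{k22_solution} (with $c = 1$, $\alpha = 2$, hence $\gamma = 1/2$), the $\cos(x/2)$ contribution is $O(t^{-2\beta})$, while multiplying the second expansion by the prefactor $\sin(x/2)\cdot (t^\beta/2)$ yields a dominant $\tfrac{4\sin(x/2)\,t^{-\beta}}{3\Gamma(1-\beta)} + O(t^{-3\beta})$ term. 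Since $2\beta > \beta$, the subdominant contributions are absorbed by choosing $t_0$ sufficiently large, and \eqref{long-time} follows with any constant $C$ slightly exceeding $4/3$.

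The hard part is really just the Gamma inequality $2\Gamma(1+\beta)^2 > \Gamma(1+2\beta)$, handled by log-convexity as above. The apparent degeneracy at $\beta = 1/2$ (where $\Gamma(1-2\beta) = \Gamma(0) = \infty$ forces the leading coefficient of $E_{2\beta}(-t^{2\beta}/4)$ to vanish) is harmless, since the $t^{-\beta}/\Gamma(1-\beta)$ contribution still dominates the cosine piece.
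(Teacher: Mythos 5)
Your proposal is correct and follows the same two-regime strategy as the paper: the power series of the Mittag--Leffler function for $t\to 0+$ and the far-field expansion \eqref{ml_decay} for $t\to\infty$, with the short-time computation for $A(t)^2$ and the square-root expansion matching the paper's route to \eqref{asymp_short} exactly. Two points where you go beyond (or around) the paper's argument are worth noting. First, the paper merely asserts that $\frac{2}{\Gamma(1+2\beta)}-\frac{1}{\Gamma(1+\beta)^2}\geq 0$ with equality iff $\beta=1$; your digamma-monotonicity argument for $f(\beta)=\log\Gamma(1+2\beta)-2\log\Gamma(1+\beta)$ being strictly increasing with $f(1)=\log 2$ is a clean way to actually prove it, and fills a gap the paper leaves to the reader. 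Second, at $\beta=\tfrac12$ the paper treats the degeneracy of \eqref{ml_decay} (the vanishing of $\Gamma(1-2\beta)^{-1}$) by passing to the closed form $A(t)^2=e^{-t/2}\bigl(1+\text{erfi}(\tfrac{\sqrt{t}}{2})^2\bigr)$ and the expansion \eqref{erfi}, whereas you observe that for the one-sided bound \eqref{long-time} the degeneracy is harmless because the dominant term comes from $E_{2\beta,1+\beta}$, whose leading coefficient $\Gamma(1-\beta)^{-1}$ does not vanish; this is a legitimate simplification for the inequality as stated (the erfi representation is still needed later for the sharp asymptotics \eqref{asymp_longtime} and \eqref{phase2}, but not here). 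The only cosmetic omission is the paper's side remark that $\lim_{t\to 0+}A'(t)=-\infty$ for $\beta\in(0,\tfrac12)$, which is not part of the proposition's claim.
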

\begin{proof}
By standard algebra and \eqref{ml_decay}, the asymptotics \eqref{asymp_short} follows, and \eqref{long-time} follows from
\begin{equation}\label{asymp_longtime}
A(t) = \frac{2}{\Gamma(1-\beta)}t^{-\beta} + O(t^{-2\beta}),\ t \rightarrow \infty.
\end{equation}
It can be shown explicitly that $\frac{2}{\Gamma(1+2\beta) }- \frac{1}{\Gamma(1+\beta)^2} \geq 0$ with the equality if and only if $\beta = 1$. Hence for $t \ll 1$, $A(t)$ decreases for any $\beta < 1$. Furthermore $\lim\limits_{t \rightarrow 0+} A^\prime(t) = -\infty$ for $\beta \in (0,\frac{1}{2})$ whereas the limit converges to a finite value for $\beta \in [\frac{1}{2},1]$. The asymptotic formula \eqref{asymp_longtime} needs to be determined separately for $\beta = \frac{1}{2}$ since \eqref{ml_decay} contains the term $\Gamma(1-2\beta)^{-1}$. The result is true, nevertheless, since 
\begin{equation*}
A(t)^2 = e^{-\frac{t}{2}}\left(1+\text{erfi}\left(\frac{\sqrt{t}}{2}\right)^2\right)    
\end{equation*}
at $\beta = \frac{1}{2}$ where the imaginary error function is defined as
\begin{equation}\label{erfi}
\text{erfi}(x) := -i erf(ix) \sim \frac{e^{x^2}}{\sqrt{\pi}x}\left( 1 + \sum_{n=1}^\infty \frac{(-1)^n (2n-1)!!}{(2x^2)^n}\right),\ x \rightarrow \infty,    
\end{equation}
in the sense of asymptotic expansion. Moreover \eqref{asymp_longtime} does not contradict the constancy of $A(t)$ at $\beta = 1$ due to the factor $\Gamma(1-\beta)^{-1}$.    
\end{proof}

\noindent \textbf{Transition to long time:} There exists $\beta_c \approx 0.672$ such that $A(t)$ decays to zero monotonically if $\beta \in (0,\beta_c)$, and $A(t)$ decays to zero via finitely many oscillations if $\beta \in (\beta_c,1)$. See \Cref{fig:amplitude}.

If $A(t)$ does not decay to zero monotonically, then there exists $t_\beta > 0$ such that $A(t)$ obtains the first local minimum at $t = t_\beta$. We verified numerically that $A^\prime(t) = 0$ has a root if and only if $\beta > \beta_c \approx 0.672$. For such $\beta
$, there cannot be infinitely many critical points of $A(t)$, since this contradicts the existence of finitely many positive zeros of $E_{2\beta}(-x)$. \Cref{fig:amplitude} gives the log of amplitudes for multiple $\beta$. If $\beta > \beta_c$, \Cref{fig:plot1} plots the first local minimum, emphasized with the dots at the peaks. If $\beta < \beta_c$, no peaks exist. \Cref{fig:plot3} is highly suggestive of the discontinuity of $t_\beta$ in $\beta$. As $\beta$ increases, the smallest local minimum \textit{plateaus}, and the critical point \textit{jumps} to a higher value of $t$. In \Cref{tab:beta_table}, we observed numerically that these jumps occur indefinitely, leading to $\lim\limits_{\beta \rightarrow 1-} t_{\beta} = \infty$.

\begin{figure}[ht!]
    \centering
    \begin{subfigure}[t]{0.32\textwidth}
        \centering
        \includegraphics[width=\textwidth]{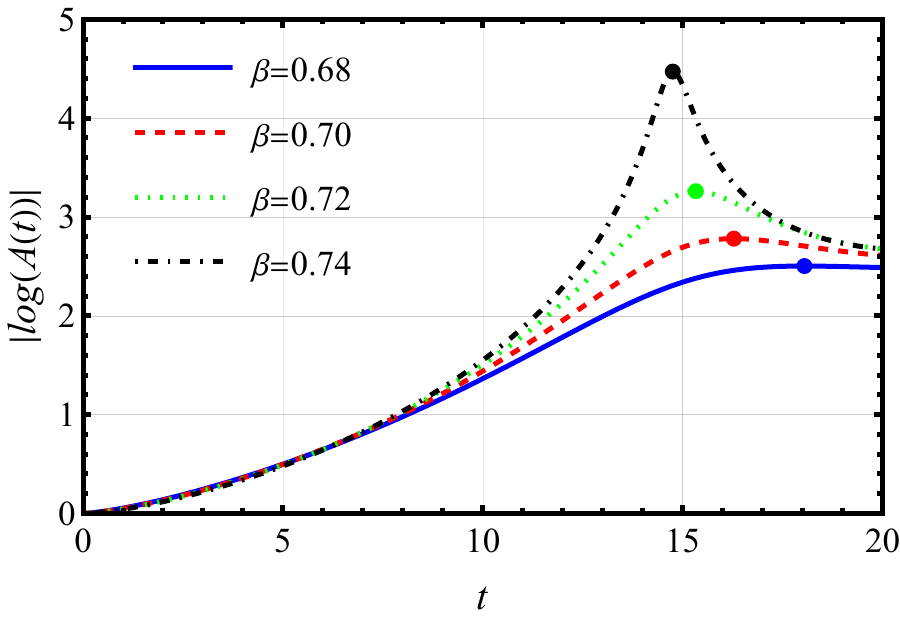} 
        \caption{$\beta > \beta_c$}
        \label{fig:plot1}
    \end{subfigure}
    \hfill
    \begin{subfigure}[t]{0.331\textwidth}
        \centering
        \includegraphics[width=\textwidth]{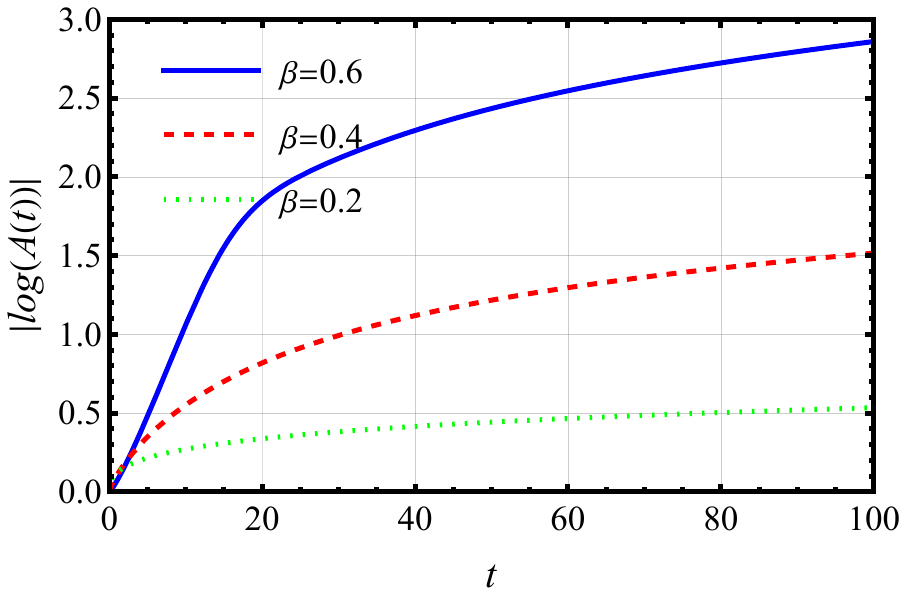} 
        \caption{$\beta < \beta_c$}
        \label{fig:plot2}
    \end{subfigure}
    \hfill
    \begin{subfigure}[t]{0.32\textwidth}
        \centering
        \includegraphics[width=\textwidth]{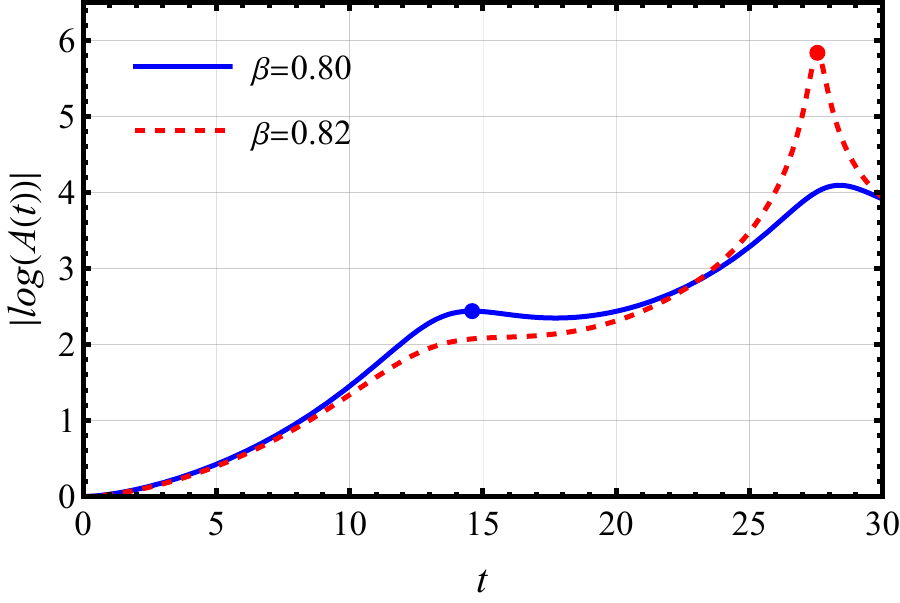} 
        \caption{Jump in $t_\beta$}
        \label{fig:plot3}
    \end{subfigure}    
    \caption{Evolution of $A(t)$: whether $A(t)$ decays to zero monotonically depends on $\beta$ relative to $\beta_c$.}
    \label{fig:amplitude}
\end{figure}

\begin{table}[ht!]
    \centering    
    \begin{tabular}{lcccccccccc} 
        \toprule
        $\beta$ & 0.80 & 0.82 & 0.84 & 0.86 & 0.88 & 0.90 & 0.92 & 0.94 & 0.96 & 0.98 \\ 
        $t_{\beta}$ & 14.60 & 27.56 & 27.13 & 27.23 & 39.83 & 40.71 & 65.28 & 90.81 & 154.07 & 356.19 \\ 
        \bottomrule
    \end{tabular}
    \caption{$\beta > \beta_c$: the first local minimum of $A(t)$.}
    \label{tab:beta_table}
\end{table}

\subsubsection{Phase-locking}
An elaboration of \Cref{phase_evolve} is given, followed by proofs. Unlike the case $\beta = 1$ where $\phi(t) = t$, and hence no phase-locking, the phase for $\beta < 1$ locks to an odd integer multiple of $\pi$ for $t \gg 1$.  Note that $\frac{\Gamma(1-\beta)}{\Gamma(1-2\beta)}$ is positive if $\beta \in (0,\frac{1}{2})$ and negative if $\beta \in (\frac{1}{2},1)$. Therefore $\phi$ approaches $\pi$ from the left (undershoot) in \eqref{phase1}, \eqref{phase2} and approaches $(2k-1)\pi$ from the right (overshoot) in \eqref{phase3}. \Cref{fig:undershoot} illustrates the monotonic growth for $\beta \leq \frac{1}{2}$ that undershoots to $\pi$. The critical threshold is $\beta = \frac{1}{2}$ where the difference to $\pi$ is exponentially small in $t$ as opposed to algebraic smallness for $\beta < \frac{1}{2}$. On the other hand, overshoot occurs if the value of $\tan\left(\frac{\phi(t)}{2}\right)$ transitions from $+\infty$ to $-\infty$ or vice versa, which occurs if and only if the denominator $E_{2\beta}(-\frac{t^{2\beta}}{4})$ takes non-degenerate (non-tangential) zeros; if $\beta \in (0,\frac{1}{2}]$, the denominator is never zero for positive $t$ due to complete monotonicity.
\begin{figure}[ht]
    \centering
    \begin{subfigure}[b]{0.45\textwidth}
        \centering
        \includegraphics[width=\textwidth]{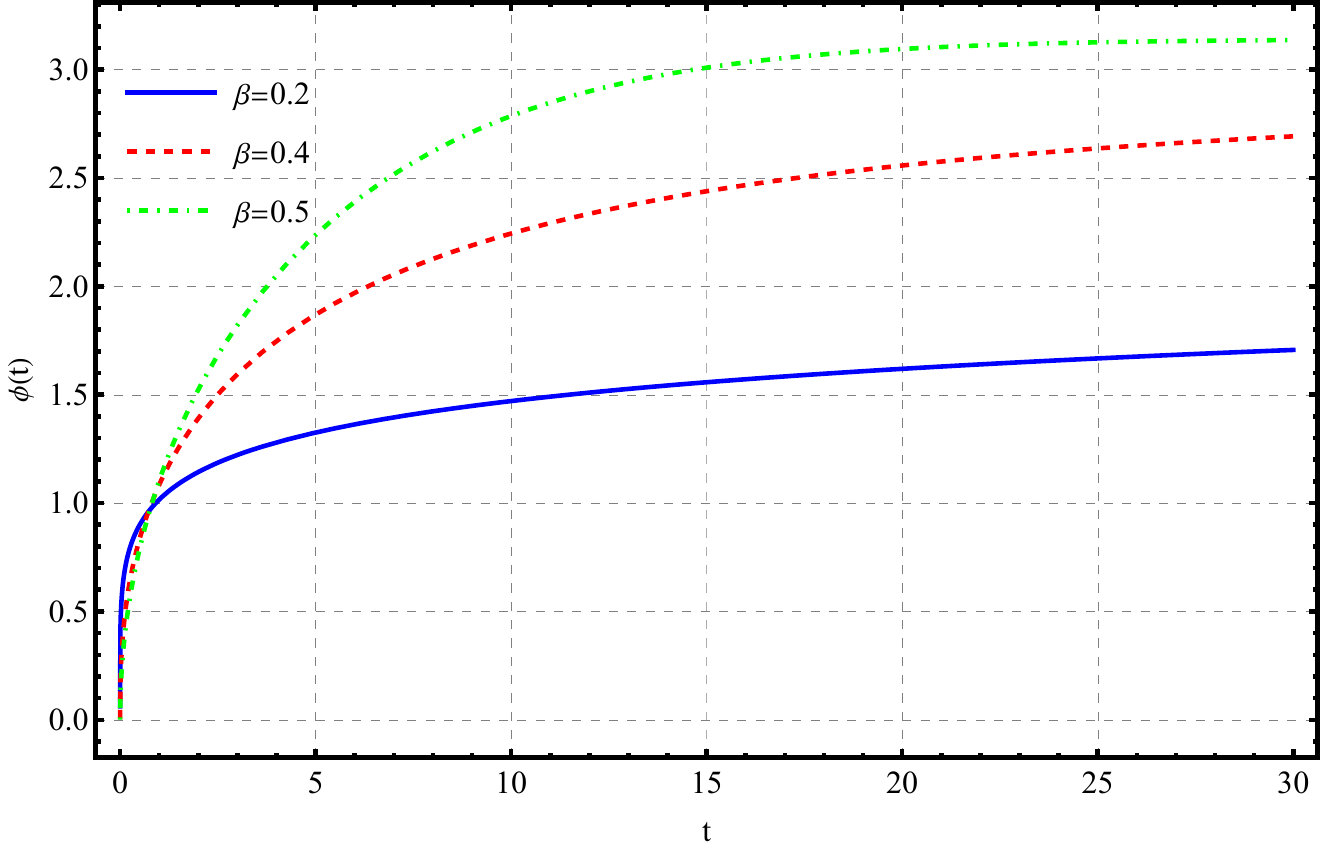}
    \end{subfigure}
    \hfill
    \begin{subfigure}[b]{0.455\textwidth}
        \centering
        \includegraphics[width=\textwidth]{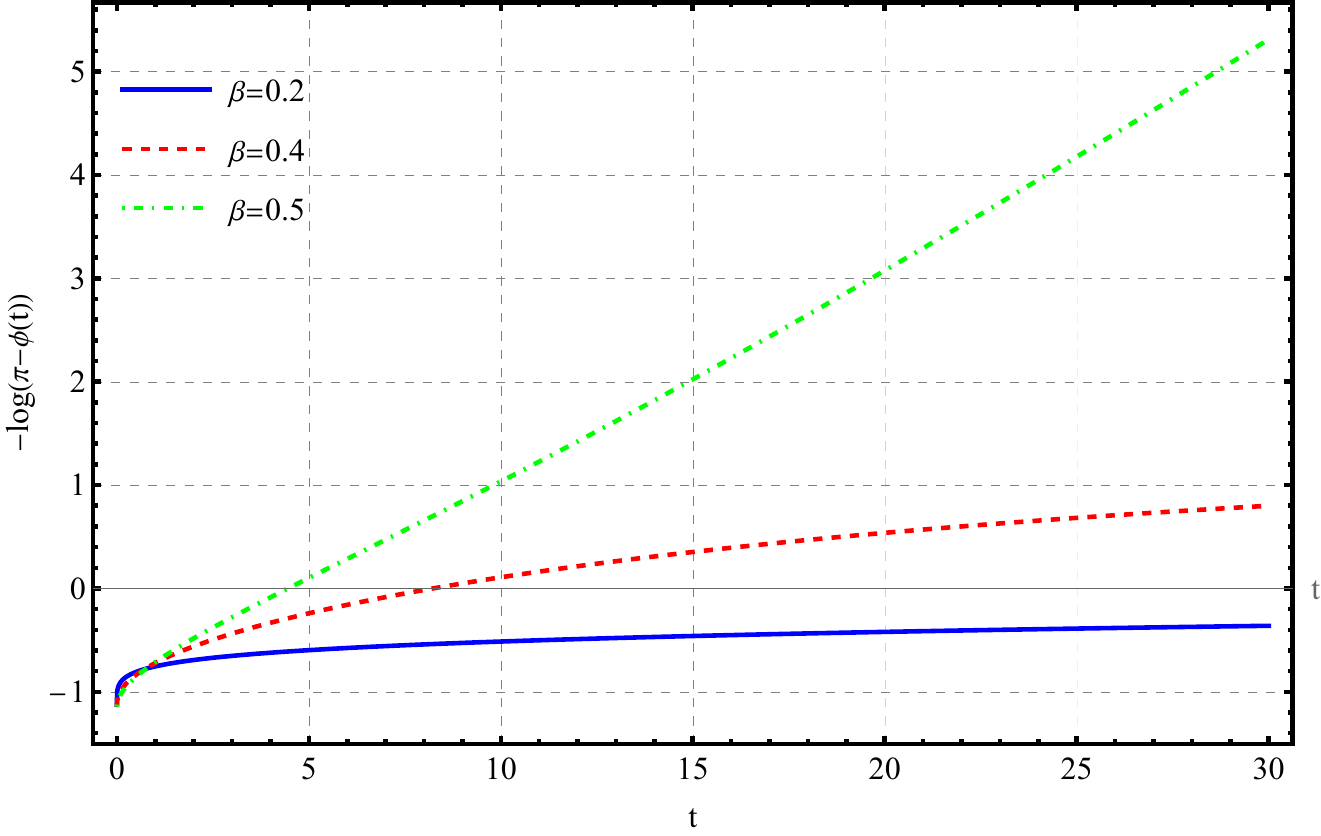}
    \end{subfigure}
    \caption{Left: monotonic growth of $\phi(t)$ for $\beta \leq \frac{1}{2}$. Right: Growth of $|\log (\pi - \phi(t))|$.}
    \label{fig:undershoot}
\end{figure}
\begin{proposition}\label{phase_evolve}
For $t \ll 1$, we have
    \begin{equation}\label{phase_short}
    \phi(t) = \frac{t^\beta}{\Gamma(1+\beta)} + O(t^{3\beta}),\ t \rightarrow 0.    
    \end{equation}
Furthermore, the transition to long-time dynamics depends on $\beta$. More precisely,

    \begin{enumerate}
        \item If $\beta \in (0,\frac{1}{2})$, then $\phi(t)$ increases to $\pi$ monotonically as $t$ increases, and
        \begin{equation}\label{phase1}
            \phi(t) = \pi - \frac{4 \Gamma(1-\beta)}{\Gamma(1-2\beta)}t^{-\beta} + O(t^{-3\beta}),\ t \rightarrow \infty.
        \end{equation}
    \item If $\beta = \frac{1}{2}$, then $\phi(t)$ increases to $\pi$ monotonically as $t$ increases, and
        \begin{equation}\label{phase2}
            \phi(t) = \pi - \sqrt{\pi} e^{-\frac{t}{4}}t^{\frac{1}{2}} + O(e^{-\frac{3t}{4}}t^{\frac{3}{2}}),\ t \rightarrow \infty.
        \end{equation}
    \item If $\beta \in (\frac{1}{2},1)$, then there exists $k = k(\beta) \in \mathbb{Z}$ such that
        \begin{equation}\label{phase3}
            \phi(t) = (2k-1)\pi + \left(\frac{-4 \Gamma(1-\beta)}{\Gamma(1-2\beta)} \right)t^{-\beta} + O(t^{-3\beta}),\ t \rightarrow \infty,
        \end{equation}
        where $|k|$ is bounded above by the number of roots of $E_{2\beta}(-x)$ where $x > 0$.
    \end{enumerate}
\end{proposition}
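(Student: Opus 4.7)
The plan is to exploit the closed-form relation
\[
\tan\!\left(\frac{\phi(t)}{2}\right) = T(t) := \frac{t^\beta E_{2\beta,1+\beta}(-t^{2\beta}/4)}{2E_{2\beta}(-t^{2\beta}/4)}
\]
from \eqref{def}, extract asymptotics of $T$, and invert via $\arctan$. For the short-time formula \eqref{phase_short}, I would insert the convergent power series $E_{2\beta}(-t^{2\beta}/4) = 1 + O(t^{2\beta})$ and $E_{2\beta,1+\beta}(-t^{2\beta}/4) = 1/\Gamma(1+\beta) + O(t^{2\beta})$ into $T(t)$, and apply $\arctan(x) = x + O(x^3)$ to obtain $\phi(t)/2 = t^\beta/[2\Gamma(1+\beta)] + O(t^{3\beta})$.

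For the long-time statements, the asymptotic \eqref{ml_decay} with $N = 2$ yields
\[
T(t) = \frac{\Gamma(1-2\beta)}{2\Gamma(1-\beta)}\,t^\beta + O(t^{-\beta}),
\]
so the sign of $\Gamma(1-2\beta)$ selects the regime. For $\beta \in (0,\tfrac{1}{2})$, both Gamma factors are positive and $T \to +\infty$. Since $2\beta \leq 1$ and $1+\beta \geq 2\beta$, both $E_{2\beta}(-\cdot)$ and $E_{2\beta,1+\beta}(-\cdot)$ are completely monotone, hence positive, confining the vector $(x,y) := (2E_{2\beta}(-t^{2\beta}/4),\, t^\beta E_{2\beta,1+\beta}(-t^{2\beta}/4))$ to the open first quadrant; thus $\phi/2 \in (0,\pi/2)$ and $\phi/2 \to \pi/2^-$. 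Inverting $\cot(\delta) = 1/\delta + O(\delta)$ with $\delta = \pi/2 - \phi/2$ produces \eqref{phase1}; strict monotonicity of $\phi$ then follows from strict monotonicity of $T$, which one verifies by differentiating the series representations. For the boundary case $\beta = \tfrac{1}{2}$, where $\Gamma(1-2\beta)$ is singular, I would use $E_1(-t/4) = e^{-t/4}$ together with the identity $t^{1/2}E_{1,3/2}(-t/4) = 2e^{-t/4}\text{erfi}(\sqrt{t}/2)$, which is derivable by matching Taylor coefficients of the two sides (or by termwise integration of $e^{\tau^2}$). This gives $T(t) = \text{erfi}(\sqrt{t}/2)$, and combining \eqref{erfi} with $\cot(\delta) = 1/\delta - \delta/3 + O(\delta^3)$ yields \eqref{phase2}.

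The remaining case $\beta \in (\tfrac{1}{2},1)$ is the most delicate. Now $\Gamma(1-2\beta) < 0$, so $T \to -\infty$, placing $\phi/2$ on the \emph{right} side (overshoot) of an odd multiple of $\pi/2$. Moreover, $E_{2\beta}(-x)$ possesses finitely many positive zeros per the background, and at each zero the vector $(x,y)$ crosses the $y$-axis, forcing the continuous phase $\phi/2$ to sweep through an odd multiple of $\pi/2$ and contributing $\pi$ to $\phi$. The continuous lift of $\arctan$ thus asymptotes to $\pi/2 + k\pi$ for some integer $k$, yielding \eqref{phase3} upon inverting $-\cot(\delta) \approx -1/\delta$; since each crossing contributes at most $\pi$ of winding, $|k|$ is bounded by the positive-zero count of $E_{2\beta}(-x)$. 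The principal obstacle is precisely this winding analysis: one must ensure transversality, i.e.\ non-coincidence of the zeros of $E_{2\beta}(-\cdot)$ and $E_{2\beta,1+\beta}(-\cdot)$, so that each axis crossing is a genuine quarter-turn of $(x,y)$ about the origin. Tangential crossings, if they occur at isolated points, can be accommodated by real-analyticity of $\phi$ in $t$, which uniquely determines the continuous lift and contributes no extra winding.
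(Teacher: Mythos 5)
Your proposal is correct and follows essentially the same route as the paper's proof: expand $\tan(\phi/2)$ via the Mittag--Leffler series for $t\to 0$, apply \eqref{ml_decay} and invert near $\pi/2$ for the $\beta\neq\frac12$ long-time regimes, reduce to $\tan(\phi/2)=\text{erfi}(\sqrt{t}/2)$ at $\beta=\frac12$, and for $\beta\in(\frac12,1)$ count the sign changes of $E_{2\beta}(-t^{2\beta}/4)$ to bound $|k|$ by its number of positive zeros. Your winding/quadrant bookkeeping for the vector $(2E_{2\beta},\,t^\beta E_{2\beta,1+\beta})$ is just an equivalent reformulation of the paper's transitions of $\theta$ between the intervals $E_j$, with the same transversality caveat the paper notes for non-degenerate zeros.
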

\begin{figure}[ht]
    \centering
    \includegraphics[width=0.45\textwidth]{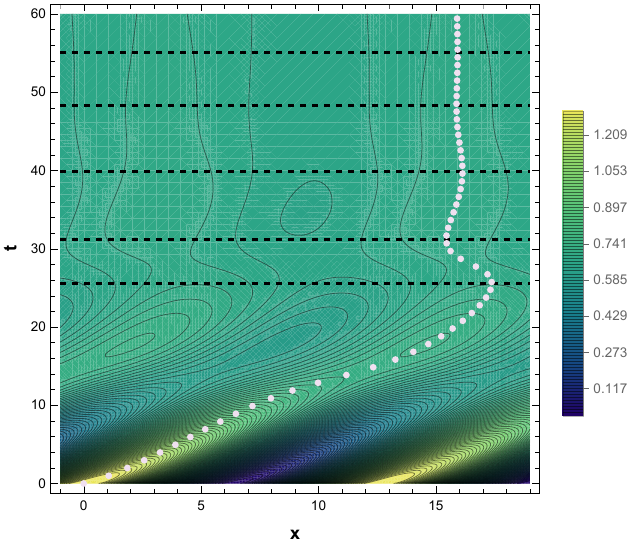} 
    \hfill 
    \includegraphics[width=0.45\textwidth]{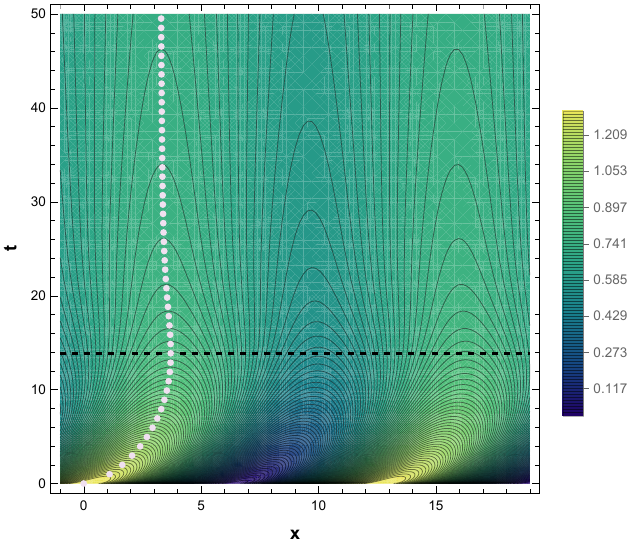} 
    \caption{Contour plots of $u(x,t)$ with $\beta = 0.8$ (Left) and $\beta = 0.6$ (Right) with $\alpha = 2,\ c = 1$. The evolution of phase $x = \phi(t)$ is marked by the round dots. The sinusoidal waves change the direction at times marked by the dotted horizontal lines.}
    \label{fig:contour}
\end{figure}
\begin{proof}
The short-time behavior \eqref{phase_short} follows from the definition of $\phi$ and the Mittag-Leffler function. For the long-time behavior, \eqref{phase1} follows from \eqref{ml_decay}. The monotonic increasing behavior of \eqref{phase1} is a consequence of the complete monotonicity of $E_{2\beta}(-x^{2\beta})$ for $x > 0$ and $2\beta \in (0,1)$. If $\beta = \frac{1}{2}$, the exact expression for $\phi(t)$ reduces to a special function. More precisely, $\tan \left(\frac{\phi(t)}{2}\right) = \text{erfi}\left(\frac{\sqrt{t}}{2}\right)$. The monotonic increasing behavior and \eqref{phase2} follow from the properties of the imaginary error function and the asymptotic expansion \eqref{erfi}.

To show the last claim \eqref{phase3}, let $\theta(t) = \frac{\phi(t)}{2}$. As $\tan \theta(t)$ suffers from the discontinuity transitioning from $+\infty$ to $-\infty$ or vice versa, $\theta(t)$ evolves continuously due to the periodicity of $\tan(\cdot)$. To make this precise, let $E_j = [(j-\frac{1}{2})\pi,(j+\frac{1}{2})\pi) \subseteq \mathbb{R}$ for $j \in \mathbb{Z}$. If $\beta \in (\frac{1}{2},1)$, then $E_{2\beta}(-x)$ has finitely many positive zeros. Since $E_{2\beta}(-x) \sim \frac{x^{-1}}{\Gamma(1-2\beta)}$ as $x \rightarrow \infty,\ E_{2\beta}(0) = 1$, and $\Gamma(1-2\beta) < 0$, there exists a non-degenerate zero by the Intermediate Value Theorem. The denominator function $E_{2\beta}(-\frac{t^{2\beta}}{4})$ switches sign from positive to negative at its first non-degenerate zero. Therefore $\theta$ moves from $E_0$ to $E_1$. Likewise every non-degenerate zero corresponds to a transition from $E_{j_0}$ to $E_{j_0 \pm 1}$, and there exists at most $N_0$ transitions where $N_0(\beta)$ is the number of zeros of $E_{2\beta}(-x)$. Take $T \gg 1$ such that $\tan \theta(t) \approx \frac{\Gamma(1-2\beta)t^\beta}{2\Gamma(1-\beta)}$ for $T \geq t$ and $T$ is greater than the largest positive root of $E_{2\beta}(-\frac{t^{2\beta}}{4})$. For any $t \geq T$, $\theta(t) \in E_{k}$ for some $k \in \mathbb{Z}$ where $|k| \leq N_0$. By expanding the inverse tangent at the infinity, we obtain \eqref{phase3} as
\begin{equation*}
    \theta(t) = k\pi - \frac{\pi}{2} -\frac{2 \Gamma(1-\beta)}{\Gamma(1-2\beta)} t^{-\beta} + O(t^{-3\beta}),\ t \rightarrow \infty.
\end{equation*}
\end{proof}

\Cref{fig:contour} illustrates the long-time behavior that overshoots to $(2k-1)\pi$ where $k(0.8) = 3,\ k(0.6) = 1$. The horizontal lines denote the critical points of $\phi(t)$ given by $\{25.53,31.21,39.91,48.24,55.06\}$ for $\beta = 0.8$ and $\{13.87\}$ for $\beta = 0.6$. The round dots denote the evolution of phase $x = \phi(t)$. Given that the phase locks as $t \rightarrow \infty$ and the amplitude decays as $t^{-\beta}$, where the solution converges uniformly to a constant, we conclude that the solution obtained from HPM with the trial function \eqref{ansatz1} is not a traveling wave solution.

\section{Fractional K(3,3)}\label{fk33}
We continue to assume $\alpha = 2,\ c = 1$ and apply HPM to derive solutions to the fractional $K(3,3)$. Let
\begin{equation}\label{ansatz2}
    u_0 = \left(\frac{3c}{2}\right)^{\frac{1}{2}} \cos\left(\frac{x}{3}\right).
\end{equation}
It is known that $\left(\frac{3c}{2}\right)^{\frac{1}{2}} \cos\left(\frac{x-ct}{3}\right)$ is a traveling wave solution to the classical $K(3,3)$. Applying HPM, we have
\begin{equation*}
\partial_t^\beta v + p \left(\partial_x (v^3) + \partial_x^3(v^3) \right) = 0.
\end{equation*}
The series ansatz \eqref{ansatz_powerseries} yields a recurrence relation
\begin{equation}\label{recurrence_k33}
\begin{split}
\partial_t^\beta u_{n+1} &= -\partial_x w_n - \partial_x^3 w_n,\ u_{n+1}(x,0) = 0,\ n = 0,1,\dots,\\
w_n &= \sum_{n_1+n_2+n_3 = n} u_{n_1}u_{n_2}u_{n_3}.   
\end{split}
\end{equation}
Integrating \eqref{recurrence_k33} in $t$ and using $u_0$, all higher coefficients are computed explicitly as
\begin{equation}
\begin{split}
u_1(x,t) &= \frac{t^\beta \sin\left(\frac{x}{3}\right)}{\sqrt{6} \, \Gamma(1 + \beta)},\\
u_2(x,t) &= -\frac{t^{2\beta} \cos\left(\frac{x}{3}\right)}{3 \sqrt{6} \, \Gamma(1 + 2\beta)},\\
u_3(x,t) &= -\frac{t^{3\beta} \left(3 \Gamma(1 + \beta)^2 - \Gamma(1 + 2\beta)\right) \sin\left(\frac{x}{3}\right)}{9 \sqrt{6} \, \Gamma(1 + \beta)^2 \Gamma(1 + 3\beta)}.
\end{split}
\end{equation}
Unlike \eqref{k22_un}, the exact expressions for higher order terms have no evident pattern for $n > 3$. 
\begin{proposition}
There exists a real sequence $\{c_n(\beta)\}_{n=0}^\infty$ with
\begin{equation*}
c_n = 0,\ \text{for }n=0,1,2, \text{ and } c_3 = \frac{3\Gamma(1+\beta)^2 - \Gamma(1+2\beta)}{\Gamma(1+\beta)^2},    
\end{equation*}
such that for $n \geq 0$,
\begin{equation*}
u_n(x,t) = (-1)^{\lfloor \frac{n}{2} \rfloor} \frac{c_n(\beta) 2^{-\frac{1}{2}} 3^{-(n-\frac{1}{2})} t^{\beta n}}{\Gamma(1+\beta n)} \psi\left(\frac{x}{3}\right);\ \psi(x) =
\begin{cases}
\cos (x), & \text{if n is even},\\
\sin (x), & \text{if n is odd},
\end{cases}\\
\end{equation*}
and
\begin{equation}\label{recurrence}
\begin{split}
c_{n+1} &= (-1)^{\lfloor \frac{n+1}{2} \rfloor} \sum_{n_1+n_2+n_3 = n}  \frac{(-1)^{\lfloor \frac{n_1}{2} \rfloor + \lfloor \frac{n_2}{2} \rfloor + \lfloor \frac{n_3}{2} \rfloor} 3^{-\sigma} \Gamma(1+\beta n) c_{n_1}c_{n_2}c_{n_3}}{\Gamma(1+\beta n_1)\Gamma(1+\beta n_2)\Gamma(1+\beta n_3)},\\
\sigma(n_1,n_2,n_3) &:=
\begin{cases}
0, & \text{if $n_j$ is even for all $1 \leq j \leq 3$, or $n_j$ is odd for all $1 \leq j \leq 3$},\\
1, & \text{otherwise}.
\end{cases}
\end{split}
\end{equation}
Therefore,
\begin{equation*}
\begin{split}
u(x,t) &= \sum_{k=0}^\infty (-1)^k \frac{c_{2k}(\beta) 2^{-\frac{1}{2}}3^{-(2k-\frac{1}{2})}t^{2\beta k}}{\Gamma(1+2\beta k)}\cos\left(\frac{x}{3}\right) + \sum_{k=0}^\infty (-1)^k \frac{c_{2k+1}(\beta) 2^{-\frac{1}{2}}3^{-(2k+\frac{1}{2})}t^{\beta(2k+1)}}{\Gamma(1+\beta(2k+1))}\sin\left(\frac{x}{3}\right)\\
&=: E_c(t;\beta) \cos\left(\frac{x}{3}\right) + E_s(t;\beta) \sin\left(\frac{x}{3}\right).
\end{split}
\end{equation*}    
\end{proposition}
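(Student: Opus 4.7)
The plan is to prove the proposition by induction on $n$, using the claimed structural form of $u_n$ as the inductive hypothesis. The base cases $n=0,1,2,3$ are verified directly by matching the explicit expressions for $u_0,\dots,u_3$ listed just above the proposition against the claimed template $(-1)^{\lfloor n/2\rfloor} c_n 2^{-1/2}3^{-(n-1/2)}t^{\beta n}\psi(x/3)/\Gamma(1+\beta n)$, which pins down $c_0,c_1,c_2$ and recovers the stated value of $c_3$.

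For the inductive step I substitute the inductive hypothesis into $w_n=\sum_{n_1+n_2+n_3=n}u_{n_1}u_{n_2}u_{n_3}$. The spatial factor of each summand is a triple product $\psi_{n_1}(x/3)\psi_{n_2}(x/3)\psi_{n_3}(x/3)$ of sines/cosines of argument $x/3$, whose parity type is determined by that of $(n_1,n_2,n_3)$. Product-to-sum identities (e.g.\ $\cos^3(y)=\tfrac{3}{4}\cos y+\tfrac{1}{4}\cos 3y$, $\cos y\sin^2 y=\tfrac{1}{4}\cos y-\tfrac{1}{4}\cos 3y$, and analogues for the other parities) decompose each triple product into a single-frequency piece (argument $x/3$) and a triple-frequency piece (argument $x$). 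The crucial observation is that both $\cos x$ and $\sin x$ are annihilated by the operator $\partial_x+\partial_x^3$, so the triple-frequency pieces do not contribute to $u_{n+1}$. A short case check shows that the coefficient of the surviving single-frequency term in $\psi_{n_1}\psi_{n_2}\psi_{n_3}$ is $3/4$ when all three $n_j$ share a common parity and $1/4$ otherwise; both cases combine into the single expression $\tfrac{3}{4}\cdot 3^{-\sigma}$ with $\sigma$ as defined in \eqref{recurrence}. Applying $-(\partial_x+\partial_x^3)$ sends $\cos(x/3)\mapsto \tfrac{8}{27}\sin(x/3)$ and $\sin(x/3)\mapsto -\tfrac{8}{27}\cos(x/3)$, which flips the spatial parity from $n$ to $n+1$ as required, and integrating in time via $J^\beta t^{\beta n}=\Gamma(1+\beta n)/\Gamma(1+\beta(n+1))\cdot t^{\beta(n+1)}$ produces the correct $t^{\beta(n+1)}/\Gamma(1+\beta(n+1))$ dependence. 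Collecting the three factors of $2^{-1/2}$ into $2^{-3/2}$, the three factors of $3^{-(n_j-1/2)}$ into $3^{-(n-3/2)}$, the coefficient $\tfrac{3}{4}\cdot 3^{-\sigma}$ from the trigonometric expansion, and the $8/27$ from the derivative operator, one finds that the numerical prefactor simplifies exactly to $2^{-1/2}3^{-(n+1-1/2)}$, while the surviving sum over $(n_1,n_2,n_3)$ is precisely the recurrence \eqref{recurrence} for $c_{n+1}$.

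The main technical obstacle is the bookkeeping of the overall sign $(-1)^{\lfloor(n+1)/2\rfloor}$. Inside the sum one has a factor $(-1)^{\lfloor n_1/2\rfloor+\lfloor n_2/2\rfloor+\lfloor n_3/2\rfloor}$ coming from the inductive hypothesis, together with an additional sign contributed by $-(\partial_x+\partial_x^3)$ when it acts on $\sin(x/3)$ (i.e., when $n$ is odd). Reconciling these with the advertised $(-1)^{\lfloor(n+1)/2\rfloor}$ prefactor requires an elementary parity argument, most transparently carried out by splitting into the four cases ($n$ even or odd, $\sigma=0$ or $1$) and verifying each separately; this is precisely where the explicit factor $(-1)^{\lfloor(n+1)/2\rfloor}$ sitting in front of the sum in \eqref{recurrence} is needed to absorb the discrepancy. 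Once the recurrence for $c_{n+1}$ is established, the closed-form decomposition $u(x,t)=E_c(t;\beta)\cos(x/3)+E_s(t;\beta)\sin(x/3)$ follows immediately by grouping even-indexed and odd-indexed terms of $\sum_n u_n$.
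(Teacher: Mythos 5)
Your argument is correct and follows essentially the same route as the paper's proof: both substitute the assumed form of $u_n$ into $w_n$, reduce the triple products $\psi_{n_1}\psi_{n_2}\psi_{n_3}$ via product-to-sum identities (with the third-harmonic terms annihilated by $\partial_x+\partial_x^3$, so that only the $\frac{3}{4}\cdot 3^{-\sigma}$ single-frequency piece survives, i.e. $A[\cos^3(\cdot/3)]=-\frac{2}{9}\sin(\cdot/3)$ and $A[\sin^2(\cdot/3)\cos(\cdot/3)]=-\frac{2}{27}\sin(\cdot/3)$), apply the fractional time integral, and collect constants to read off the recurrence for $c_{n+1}$. The only point worth flagging is that matching the base cases as you describe yields $c_0=c_1=c_2=1$ rather than the $c_n=0$ printed in the statement, which is evidently a typo (the paper later asserts $c_n(1)=1$ for all $n\geq 0$).
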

\begin{proof}
Let $n$ be even; the argument for $n$ odd is similar. Then $n = n_1 + n_2 + n_3$ holds for $n_j$ even for all $1 \leq j \leq 3$, or $n_j$ being even for some $j$ and the other two indices being odd. The operator $A := \partial_x + \partial_{x}^3$ acts on the spatial functions as
\begin{equation*}
\begin{split}
A[\cos^3\left(\frac{\cdot}{3}\right)](x) &= -\frac{2}{9} \sin \left(\frac{x}{3}\right),\\
\ A[\sin^2\left(\frac{\cdot}{3}\right)\cos\left(\frac{\cdot}{3}\right)](x) &= -\frac{2}{27} \sin \left(\frac{x}{3}\right).
\end{split}
\end{equation*}
By applying the fractional integral, we have
\begin{align*}
    u_{n+1} &= -\frac{1}{\Gamma(\beta)}\int_0^t A w_n (x,\tau) (t-\tau)^{\beta - 1} d \tau\\
    &= -\frac{t^{\beta (n+1)}\Gamma(1+\beta n)}{\Gamma(1+\beta (n+1))} \sum_{n_1+n_2+n_3 = n} \frac{(-1)^{\lfloor \frac{n_1}{2}\rfloor + \lfloor \frac{n_2}{2}\rfloor + \lfloor \frac{n_3}{2}\rfloor}3^{-(n-\frac{3}{2})}c_{n_1}c_{n_2}c_{n_3}}{2^{\frac{3}{2}}\Gamma(1+\beta n_1)\Gamma(1+\beta n_2)\Gamma(1+\beta n_3)}A[\psi_{n_1}\left(\frac{\cdot}{3}\right)\psi_{n_2}\left(\frac{\cdot}{3}\right)\psi_{n_3}\left(\frac{\cdot}{3}\right)]\\
    &= \frac{2^{-\frac{1}{2}}3^{-(n+\frac{1}{2})}t^{\beta(n+1)}}{\Gamma(1+\beta(n+1))} \sum_{n_1+n_2+n_3 = n} \frac{(-1)^{\lfloor \frac{n_1}{2}\rfloor + \lfloor \frac{n_2}{2}\rfloor + \lfloor \frac{n_3}{2}\rfloor}3^{-\sigma} \Gamma(1+\beta n) c_{n_1}c_{n_2}c_{n_3}}{\Gamma(1+\beta n_1)\Gamma(1+\beta n_2)\Gamma(1+\beta n_3)} \sin\left(\frac{x}{3}\right),
\end{align*}
from which \eqref{recurrence} follows.    
\end{proof}
It can be checked directly that $c_n(1) = 1$ for all $n \geq 0$. 
\subsection{Dynamics of fractional $K(3,3)$}
The asymptotics for the short-time behavior can be argued similarly as the fractional $K(2,2)$. We give a numerical evidence of finite time blow-up. Consequently, the traveling wave with the initial condition $u_0$ for the classical $K(3,3)$ is no longer a traveling wave for $\beta < 1$ via HPM. As \eqref{def}, define
\begin{equation}\label{def2}
A(t) = \left(E_c(t)^2+E_s(t)^2\right)^{\frac{1}{2}},\ \tan \left(\frac{\phi(t)}{3}\right) = \frac{E_s(t)}{E_c(t)},
\end{equation}
such that $u(x,t) = A(t) \cos\left(\frac{x-\phi(t)}{3}\right)$, and $\phi:(-T,T) \rightarrow \mathbb{R}$ is continuous and $\phi(0) = 0$.
\begin{proposition}
For any $\beta \in (0,1)$, we have
\begin{equation}\label{asymp_short2}
    \begin{split}
        A(t) &= \left(\frac{3}{2}\right)^{\frac{1}{2}}-\frac{1}{3\sqrt{6}} \left(\frac{1}{ \Gamma (1+2\beta)}-\frac{1}{2 \Gamma (1+\beta)^2}\right)t^{2\beta} + O\left(t^{4\beta}\right),\ t \rightarrow 0,\\
        \phi(t) &= \frac{t^\beta}{\Gamma(1+\beta)} + O(t^{3\beta}),\ t \rightarrow 0,
    \end{split}
\end{equation}
where $\frac{1}{ \Gamma (1+2\beta)}-\frac{1}{2 \Gamma (1+\beta)^2} \geq 0$ with the equality if and only if $\beta = 1$.    
\end{proposition}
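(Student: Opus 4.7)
The plan is to expand $E_c(t)$ and $E_s(t)$ as power series in $t^\beta$, keep only the two lowest-order terms in each, and then insert the result into the definitions \eqref{def2}. The explicit coefficients needed are already at hand: from the previous proposition one has $c_0=c_1=c_2=1$ and $c_3 = (3\Gamma(1+\beta)^2 - \Gamma(1+2\beta))/\Gamma(1+\beta)^2$. Substituting these into the two series gives
\begin{equation*}
E_c(t) = \sqrt{\tfrac{3}{2}} - \frac{t^{2\beta}}{3\sqrt{6}\,\Gamma(1+2\beta)} + O(t^{4\beta}), \qquad E_s(t) = \frac{t^\beta}{\sqrt{6}\,\Gamma(1+\beta)} + O(t^{3\beta}).
\end{equation*}

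Next I would compute the amplitude squared by direct expansion, using the algebraic simplification $\sqrt{3/2}\cdot \tfrac{1}{3\sqrt{6}} = \tfrac{1}{6}$ and $\sqrt{6}\cdot\sqrt{3/2} = 3$ to obtain
\begin{equation*}
A(t)^2 = \tfrac{3}{2} - \tfrac{t^{2\beta}}{3}\left(\tfrac{1}{\Gamma(1+2\beta)} - \tfrac{1}{2\Gamma(1+\beta)^2}\right) + O(t^{4\beta}),
\end{equation*}
and then apply $\sqrt{1+x} = 1 + x/2 + O(x^2)$ to recover the claimed expansion of $A(t)$. For the phase, dividing the two leading terms yields $\tan(\phi(t)/3) = t^\beta/(3\Gamma(1+\beta)) + O(t^{3\beta})$, and the standard identity $\arctan x = x + O(x^3)$ together with the continuity-with-$\phi(0)=0$ normalization in \eqref{def2} gives $\phi(t) = t^\beta/\Gamma(1+\beta) + O(t^{3\beta})$.

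For the positivity statement, I would set $f(\beta) := \Gamma(1+2\beta)/[2\Gamma(1+\beta)^2]$, so the inequality is equivalent to $f(\beta)\leq 1$. Compute the logarithmic derivative
\begin{equation*}
(\log f)'(\beta) = 2\psi(1+2\beta) - 2\psi(1+\beta),
\end{equation*}
which is strictly positive on $(0,1)$ because the digamma function $\psi$ is strictly increasing on $(0,\infty)$ and $1+2\beta > 1+\beta$. Hence $f$ is strictly increasing on $[0,1]$, and since $f(1)=1$ the bound $f(\beta)\leq 1$ holds with equality only at $\beta=1$. (Equivalently, one may appeal to $B(1+\beta,1+\beta) = \Gamma(1+\beta)^2/\Gamma(2+2\beta) = \int_0^1(t(1-t))^\beta\,dt$ together with $t(1-t)\leq 1/4$; this gives the same conclusion modulo an application of the duplication step $\Gamma(2+2\beta)=(1+2\beta)\Gamma(1+2\beta)$.)

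The only real obstacle is bookkeeping: carefully tracking the scalar constants through the Taylor expansion of the square root and checking that the cross term from $E_c^2$ combines cleanly with the square term from $E_s^2$ to produce the single bracket $\tfrac{1}{\Gamma(1+2\beta)} - \tfrac{1}{2\Gamma(1+\beta)^2}$. No new analytic input beyond the explicit coefficients $c_0,\dots,c_3$, the monotonicity of $\psi$, and elementary Taylor expansions is required; the structure of the argument directly parallels the short-time portion of the $K(2,2)$ proposition, so once the two series for $E_c$ and $E_s$ are written down, the derivation is routine.
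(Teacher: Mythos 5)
Your proposal is correct and takes essentially the route the paper intends: the paper's entire justification is the sentence that the asymptotics are ``immediate from the definition \eqref{def2},'' and your two-term expansions of $E_c,E_s$ (using $c_0=c_1=c_2=1$, which is what the paper's displayed $u_0,u_1,u_2$ force despite the typo ``$c_n=0$''), the square-root and arctangent expansions, and the digamma-monotonicity proof of $2\Gamma(1+\beta)^2\ge\Gamma(1+2\beta)$ all check out, the last of these supplying a proof the paper merely asserts. One minor caveat: in your parenthetical alternative, applying $t(1-t)\le \tfrac14$ directly to $B(1+\beta,1+\beta)=\int_0^1 (t(1-t))^{\beta}\,dt$ gives an \emph{upper} bound on $\Gamma(1+\beta)^2/\Gamma(2+2\beta)$ while the claim requires a lower bound, so that side remark would need reworking (e.g.\ via $(t(1-t))^{\beta}\ge 4^{1-\beta}\,t(1-t)$ for $\beta\le 1$); the digamma argument you actually rely on is fine.
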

\noindent\textbf{Growth of $c_n(\beta)$:} For any $\beta \in (0,1)$, there exists $\gamma(\beta) > 0$ such that $c_n^{1/n} \sim \gamma \Gamma(1+\beta n)^{\frac{1}{n}},\ n \rightarrow \infty$.

The asymptotics \eqref{asymp_short2} is immediate from the definition \eqref{def2}. Therefore the fractional $K(2,2)$ and $K(3,3)$ exhibit similar short-time behavior where the amplitude decreases and the phase $\sim \frac{t^\beta}{\Gamma(1+\beta)}$. The growth of $c_n(\beta)$ is estimated numerically to be super-exponential, implying that the radius of convergence of both $E_c,E_s$ is finite. Therefore the solution cannot be extended above some terminal time $T>0$, if smoothness in $t$ were desired.

\begin{figure}[ht]
    \centering
    \begin{minipage}[t]{0.45\textwidth} 
        \centering
        \vspace{7ex}
       \begin{tabular}{cc}
    \toprule
    $\beta$ & $R$ \\
    \midrule
    0.1 & 10.0508 \\
    0.3 & 5.1655 \\
    0.5 & 5.6875 \\
    0.7 & 7.0081 \\
    0.9 & 10.2901 \\
    \bottomrule
\end{tabular}
    \end{minipage}
    \hfill
    \begin{minipage}[t]{0.54\textwidth} 
        \centering
        \vspace{-28ex} 
         \includegraphics[width=\textwidth, trim=50 0 50 0, clip]{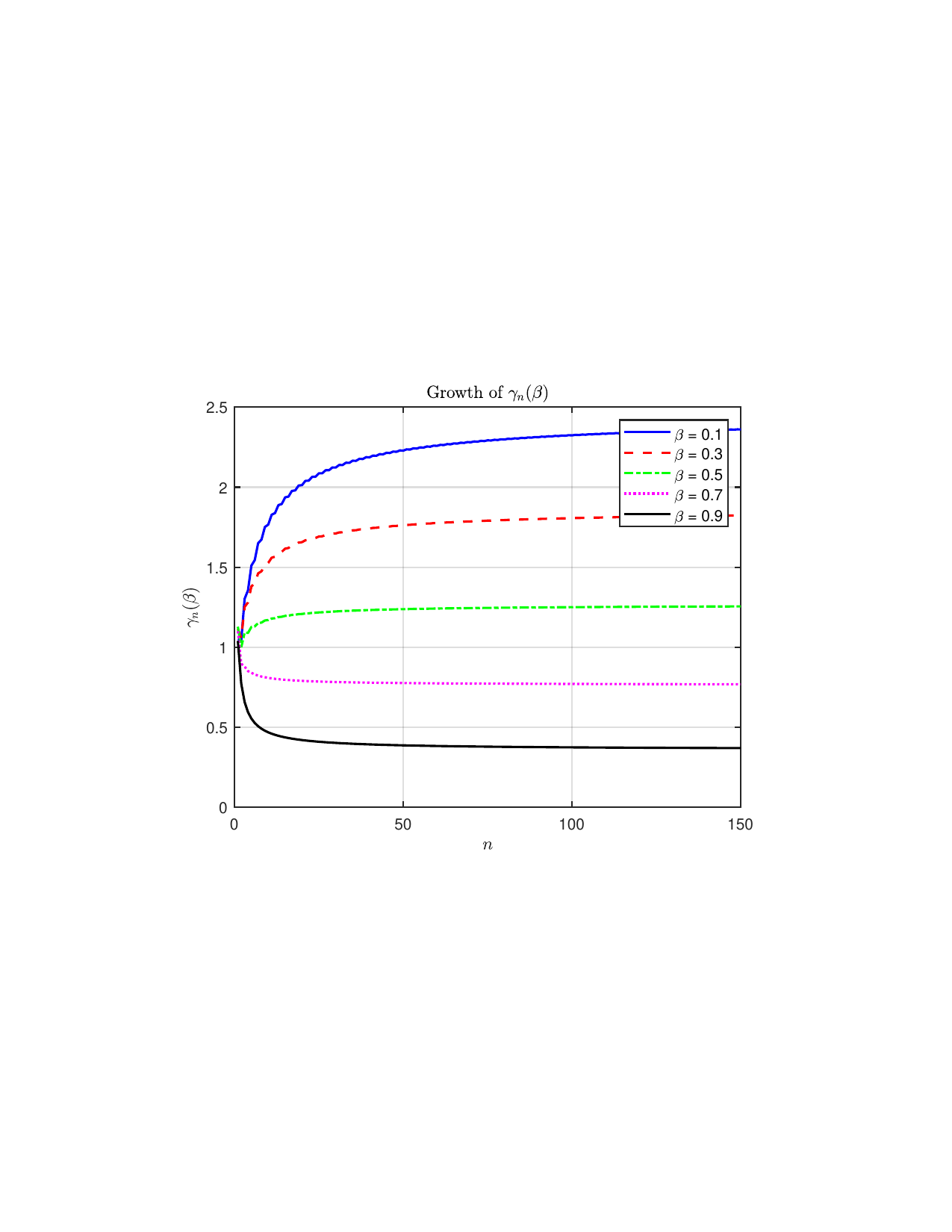} 
         \vspace{-30ex}
    \end{minipage}
    \caption{Left: Radii of convergence ($R$) of $E_c$ and $E_s$. Right: Growth of $\gamma_n(\beta)=\left(\frac{c_n(\beta)}{\Gamma(1+\beta n)}\right)^{1/n}$.}
    \label{growth}
\end{figure}
\begin{figure}[htbp]
    \centering
    \begin{subfigure}[b]{0.4\textwidth}
        \raisebox{8mm}{
        \includegraphics[width=\textwidth]{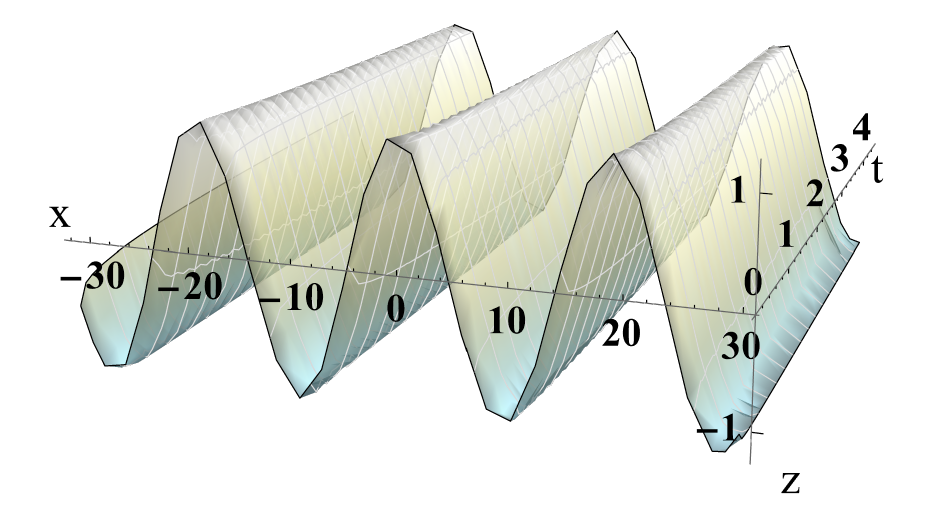}}
    \end{subfigure}
    \hfill 
    \begin{subfigure}[b]{0.4\textwidth}
        \includegraphics[width=\textwidth]{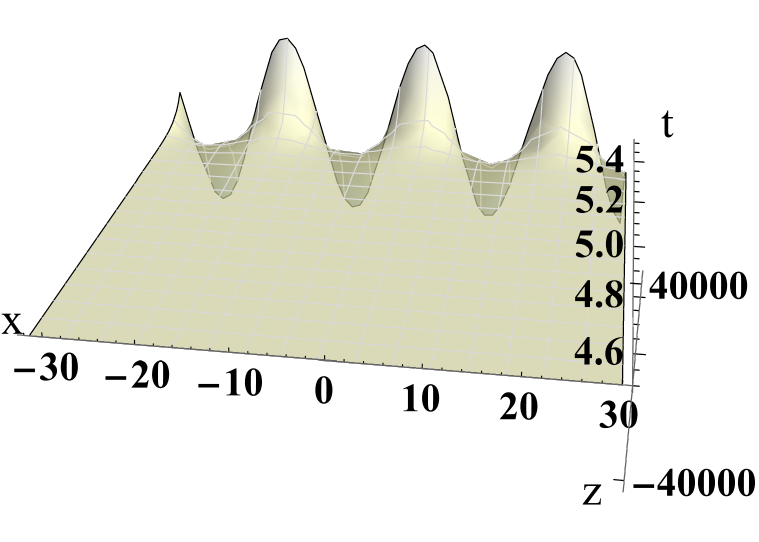}
    \end{subfigure}
    \caption{3D plots for $\beta = 0.3$ with $E_c,E_s$ approximated up to the partial sum $k = 400$. Left: $t \in [0,4.5]$. Right: $t \in [4.5,5.5]$.}
    \label{fig:blowup}
\end{figure}

If $t \ll 1$ is not satisfied, then HPM predicts finite time blow-up. In \Cref{growth}, the computation of higher iterates of $c_n(\beta)$ using MATLAB highly suggests $c_n^{1/n}$ grows as $n^{\beta}$ up to some constant as $n \rightarrow \infty$ by observing the convergent behavior of $\gamma_n = \left(\frac{c_n(\beta)}{\Gamma(1+\beta n)}\right)^{1/n}$ and using the Stirling's approximation to the gamma function. This implies both series $E_c(t;\beta),E_s(t;\beta)$ have the same finite radius of convergence, which we compute numerically in \Cref{growth}. More precisely, from the numerical observation that $\left(\frac{c_n(\beta)}{\Gamma(1+\beta n)}\right)^{\frac{1}{n}}$ converges to a finite value $\gamma(\beta)>0$, the Cauchy-Hadamard's root test is used to compute the radii of convergence. \Cref{fig:blowup} illustrates the divergence of series with $\lim\limits_{t \rightarrow T-} |u(x,t)| = \infty$ for $\beta = 0.3$. The left plot shows sinusoidal waves evolving for small time with $\sim 1$ magnitude, consistent with $u_0 = \left(\frac{3}{2}\right)^{\frac{1}{2}} \cos \left(\frac{x}{3}\right)$. Near the terminal time $T \approx 5.1655$ computed in \Cref{growth}, the right plot shows the amplitude increasing to $\sim 10^4$.
\section{Conclusion}
\begin{itemize}
    \item Showed that the spatial nonlocality modeled by the Riesz derivative prevents the existence of traveling compactons in the Rosenau-Hyman equation by the Paley-Wiener theorem.
    \item Applied HPM to extend the periodic traveling wave solutions of the classical $K(2,2)$ and $K(3,3)$ into the nonlocal regime governed by the Caputo derivative and observed no evidence of traveling waves. 
    \item Showed that the short-time behaviors of the fractional $K(2,2)$ and $K(3,3)$ are similar, marked by the sublinear evolution in phase.
    \item Showed that the long-time dynamics of the fractional $K(2,2)$ exhibits bifurcation as $\beta$ crosses $\frac{1}{2}$.
    \item Numerically computed the coefficients of the infinite series that determines the time evolution of the fractional $K(3,3)$, thereby highly suggesting that the solution cannot be smooth globally in time. 
\end{itemize}
\noindent \textbf{Limitations and future directions:} It is of interest to give a rigorous proof of the super-exponential growth of $c_n(\beta)$, which would imply finite time blow-up in the category of smooth solutions. However there could be non-unique ways to extend the solution by allowing discontinuities, analogous to shock waves in the Burger's equation. Lastly, it was verified that HPM is a powerful and simple method in constructing series solutions. The method was applied to fractional nonlinear PDEs with nonlinear dispersion relations. However, we numerically demonstrated that the method could yield divergent series when applied to the time-fractional KdV. It is left for future research to improve HPM, possibly by combining with VIM, HAM, or ADM, among others, to expand its applicability to wider classes of differential equations.

\bibliographystyle{ieeetr}
\small
\bibliography{ref}

\end{document}